\documentclass{amsart}
\usepackage[noadjust]{cite}
\usepackage{tikz,amsmath,amssymb,enumerate,amsfonts,stackengine,multicol,microtype,epsfig,xcolor,fourier}
\usepackage[inline]{enumitem}
\usepackage[mathic=true]{mathtools}
\usetikzlibrary{decorations.markings,decorations.fractals}
\usepackage[hidelinks=true]{hyperref}

\theoremstyle{plain}
\newtheorem{theorem}{Theorem}[section]

\newtheorem{proposition}[theorem]{Proposition}
\newtheorem{lemma}[theorem]{Lemma}
\newtheorem{corollary}[theorem]{Corollary}

\theoremstyle{definition}

\newtheorem{definition}[theorem]{Definition}
\newtheorem{remark}[theorem]{Remark}





\newcommand{\occult}[1]{}

\newcommand{\annotation}[1]{\leavevmode\raise.8ex\hbox to0pt{\hss\ensuremath{\curlyvee}\hss}\marginpar{\tiny\baselineskip6pt #1}}

\usepackage{soul}

\newcommand{\id}{\mathrm{id}}

\stackMath
\makeatletter
\newcommand*{\relbarfill@}{\arrowfill@\relbar\relbar\relbar}
\newcommand*{\xminus}[2][]{\ext@arrow 0055\relbarfill@{#1}{#2}}
\newcommand\xxrightarrow[2][]{\mathrel{%
  \setbox2=\hbox{\stackon{\scriptscriptstyle#1}{\scriptscriptstyle#2}}%
  \stackunder[-2pt]{%
    \stackon[.2pt]{\xminus{\makebox[\dimexpr\wd2\relax]{}}}{\scriptscriptstyle#2}%
  }{%
   \scriptscriptstyle#1\,%
  }\,\llap{$\to$}%
}}
\makeatother
\newcommand{\lto}[1]{\xxrightarrow[\;#1\;]{}}
\newcommand{\dfn}{\mathbin{{\vcentcolon}\hskip-1pt{=}}}
\let\emptyset\varnothing
\newcommand\R{\ensuremath{\mathbb R}}

\newcommand\N{\ensuremath{\mathbb N}}
\newcommand\Oc{\ensuremath{\mathcal O}}

\makeatletter
\newcommand*{\Relbarfill@}{\arrowfill@\Relbar\Relbar\Relbar}
\newcommand*{\xeq}[2][]{\ext@arrow 0055\Relbarfill@{#1}{#2}}
\newcommand\xequal[2][]{\mathbin{\ext@arrow 0055{\Equalfill@}{#1}{#2}}}
\def\Equalfill@{\arrowfill@=\Relbar=}
\makeatother

\def\norm#1{\|#1\|}
\def\conorm#1{\|\!{\lfloor}#1{\rfloor}\!\|}
\def\normconorm#1{\conorm{#1}\le\norm{#1}}

\begin{document}

\title[Accessibility and centralizers for flows]{Accessibility and centralizers for partially hyperbolic flows}

\begin{abstract}
Stable accessibility for partially hyperbolic diffeomorphisms is central to their ergodic theory, and we  establish its \(C^1\)-density among \textbullet~all, \textbullet~volume-preserving, \textbullet~symplectic, and \textbullet~contact partially hyperbolic \emph{flows}. 

As applications, we obtain in each of these 4 categories  \(C^1\)-density of \emph{\(C^1\)-stable} topological transitivity, ergodicity, and triviality of the centralizer.
\end{abstract}

\author{Todd Fisher}
\address{T.~Fisher, Department of Mathematics, Brigham Young University, Provo, UT 84602, USA}\email{tfisher@math.byu.edu}
\author{Boris Hasselblatt}
\address{B.~Hasselblatt, Department of Mathematics, Tufts University, Medford, MA 02144, USA}\email{boris.hasselblatt@tufts.edu}

\thanks{T.F.\ is supported by Simons Foundation grant \# 580527.}

\subjclass[2010]{37D30, 37D40, 37C80}
\keywords{Dynamical systems; partially hyperbolic systems and dominated splittings; centralizer; accessibility}

\maketitle

\section{Introduction}
The renaissance of partial hyperbolicity that began in the 1990s centered on the quest for stably ergodic dynamical systems. The Hopf argument as the central technical device brought the notion of accessibility to the fore, and this motivated results to the effect that stable accessibility of partially hyperbolic diffeomorphisms (Definition \ref{DEFparthypnarrow}) is  \(C^1\)-dense \cite[Main Theorem]{DolgopyatWilkinson}, \cite{AvilaCrovisierWilkinson}, \cite[Theorem 8.5]{PesinPHLectures}. Our first aim is to show that this also holds for flows. Among the applications is a \(C^1\)-open dense set of partially hyperbolic flows with trivial flow-centralizer.
\subsection*{Statement of results}
\begin{theorem}[Generic accessibility]\label{THMOpenDenseAccessibility}
For any smooth compact manifold \(M\) and \(r\ge1\), \(C^1\)-stable accessibility (Definition \ref{DEFAccessibility})
is \(C^1\) dense among \begin{itemize*}[before=\quad,itemjoin=\quad]
\item all,
\item volume-preserving,
\item symplectic, and
\item contact
\end{itemize*}\quad
partially hyperbolic \(C^r\) flows on
\(M\).
\end{theorem}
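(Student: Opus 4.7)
The plan is to transfer the diffeomorphism theorems of Dolgopyat–Wilkinson and Avila–Crovisier–Wilkinson to the flow setting via a local reduction to Poincar\'e return maps. Given a partially hyperbolic flow $\phi^t$ with generating vector field $X$, fix a regular point $p$ and choose a small codimension-one disk $\Sigma\ni p$ transverse to $X$. The first-return map $P_\Sigma$ is a (locally defined) partially hyperbolic diffeomorphism of $\Sigma$, with $E^{s,u}_{P_\Sigma}=E^{s,u}_\phi\cap T\Sigma$ and $E^c_{P_\Sigma}=E^c_\phi\cap T\Sigma$, i.e.\ the center of $P_\Sigma$ is the center of the flow modulo the flow direction. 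Accessibility classes of the flow inside the $\phi$-saturation of $\Sigma$ correspond exactly to accessibility classes of $P_\Sigma$ on $\Sigma$.

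The heart of the argument is a \emph{perturbation dictionary}: vector-field perturbations of $X$ supported in a flow box over $\Sigma$ correspond to $C^1$-small perturbations of $P_\Sigma$, and conversely, any $C^1$-small perturbation of $P_\Sigma$ can be realized this way. Crucially, the dictionary must respect each of the four invariance classes. Volume-preserving flows yield volume-preserving return maps (Fubini in the flow direction); contact flows yield symplectic return maps on $\Sigma$ via the standard Reeb reduction; and symplectic flows, restricted to regular energy hypersurfaces, likewise yield symplectic return maps. In each category, one must verify that a structure-preserving perturbation of $P_\Sigma$ can be extended to a structure-preserving vector-field perturbation of $X$, with $C^1$-control.

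With the dictionary in place, apply the appropriate diffeomorphism theorem to $P_\Sigma$ to obtain a stably accessible $C^1$-approximation $\widetilde{P}_\Sigma$, then lift it to a vector-field perturbation of $X$. To globalize, cover $M$ by finitely many flow boxes $U_1,\dots,U_N$ and iterate: at step $i$, perturb the flow inside $U_i$ to enlarge accessibility classes there, while the improvements from steps $1,\dots,i-1$ are preserved because stable accessibility is a $C^1$-open condition. A standard Baire-category argument then yields $C^1$-density in each of the four categories.

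The main obstacle is the structure-preserving extension, especially in the contact setting, where one starts with a symplectic perturbation of $P_\Sigma$ and must produce a contact perturbation of $X$ realizing it, controlling the contact form, the Reeb field, and the return-time function simultaneously. Establishing such an extension with the correct $C^1$-estimates is where the technical weight of the proof will lie; by contrast, the Baire-category globalization in the final step is routine once the local perturbation lemmas are in hand.
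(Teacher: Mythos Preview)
Your reduction to Poincar\'e return maps has a genuine gap. The first-return map $P_\Sigma$ to a small transverse disk $\Sigma$ is, as you note, only \emph{locally} defined: for a general partially hyperbolic flow there is no reason orbits through $\Sigma$ ever return, and where they do the return time is unbounded near the boundary. Thus $P_\Sigma$ is not a partially hyperbolic diffeomorphism of a \emph{closed} manifold, and the Dolgopyat--Wilkinson and Avila--Crovisier--Wilkinson theorems do not apply to it. Your ``dictionary'' then has nothing to translate: the diffeomorphism-side input is missing. Nor does covering $M$ by flow boxes fix this, because the ACW perturbations, while supported in small balls, achieve their effect through Brin quadrilaterals built from long $su$-legs that traverse the whole manifold; localizing the dynamics to a single flow box throws away exactly the long-time hyperbolicity that the argument needs. (A global Birkhoff section would salvage the idea, but such sections need not exist.)

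The paper avoids this by working with the \emph{time-$1$ map} $\varphi^1$, which \emph{is} a globally defined partially hyperbolic diffeomorphism of $M$, so the combinatorial skeleton of the ACW proof (c-admissible disk families, global $c$-sections, $\theta$-accessibility via quadrilaterals) carries over verbatim. The only place where being a flow matters is the perturbation step: a $C^1$-small perturbation of $\varphi^1$ is generically not a time-$1$ map. The paper's substitute (Lemma~\ref{l.perturbflow}) observes that the ACW perturbations are of a very specific local form---adding a small constant center vector inside a small ball---and constructs such perturbations directly at the level of the generating vector field, using a pasting lemma in the volume-preserving case, interpolation of local Hamiltonians in the symplectic case, and interpolation of Darboux contact forms in the contact case. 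No return-map reduction, and no lifting of an abstract symplectic map back to a contact flow, is needed.
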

Our motivation for establishing genericity of accessibility was to adapt arguments of Burslem \cite{Burslem} in order to establish generic triviality of the flow-centralizer (Definition \ref{DEFTrivialCentralizer}), in particular a relative paucity of faithful \(\mathbb R^k\) Anosov actions.
\begin{theorem}[No centralizer]\label{THMTrivFlowCentralizer}
On any smooth compact manifold \(M\) and for any \(r\ge1\), \(C^r\)-flows which \(C^1\)-stably have trivial flow-centralizer are \(C^1\) dense among 
\begin{itemize*}[before=\quad,itemjoin=\quad]
\item all,
\item volume-preserving,
\item symplectic, and
\item contact
\end{itemize*}\quad
partially hyperbolic \(C^r\) flows on \(M\).
\end{theorem}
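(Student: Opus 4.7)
The plan is to import Burslem's centralizer argument from the diffeomorphism setting to flows. By Theorem~\ref{THMOpenDenseAccessibility}, the $C^1$-stably accessible partially hyperbolic $C^r$ flows are $C^1$-dense in each of the four categories. Intersecting with a $C^1$-residual Kupka--Smale-type set---on which every periodic orbit is hyperbolic, prime periods are pairwise distinct, and the time-period maps on the strong stable/unstable bundles have simple non-resonant spectra---reduces the theorem to showing that each such flow has trivial $C^1$ flow-centralizer.

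Fix such a flow $\phi^t$ and let $g$ satisfy $g\phi^t = \phi^t g$ for all $t$; the aim is to show $g = \phi^s$ for some $s\in\R$. Since $g$ permutes hyperbolic periodic orbits preserving prime period, the distinctness condition forces $g$ to leave each such orbit $O$ invariant. On $O\cong\R/T\Z$ the commutation with the $\R$-action of $\phi$ gives $g|_O = \phi^{s_0}|_O$ for some $s_0\in\R$, so replacing $g$ by $\phi^{-s_0}\circ g$ we may assume $g$ fixes a chosen $O$ pointwise and reduce to proving $g = \id$.

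Commutation with $\phi^t$ forces $g$ to preserve the strong stable and unstable foliations $W^{ss},W^{uu}$. For $p\in O$, the restriction $g|_{W^{ss}(p)}$ is a $C^1$ diffeomorphism fixing $p$ and commuting both with the contraction $\phi^T|_{W^{ss}(p)}$ and with the full $\R$-action of $\phi$ on this leaf. Sternberg linearization (valid under the non-resonance hypothesis) conjugates $\phi^t|_{W^{ss}(p)}$ to its linear part, so the centralizer equations reduce to commutation with a hyperbolic linear flow of simple spectrum; the remaining scalar freedoms on each eigenline are killed by imposing the analogous equations at the other $g$-fixed periodic orbits and using the coherence enforced by the strong foliations. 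An identical analysis for $W^{uu}(p)$, combined with commutation with the flow direction on $E^c(p)$, gives $g = \id$ on a strong-leaf neighborhood of $O$. Stable accessibility then propagates $g = \id$ to all of $M$.

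The main difficulty lies precisely in the rigidity step: a single hyperbolic periodic orbit is not by itself enough, since the linear centralizer of the one-parameter group $D\phi^t(p)$ already contains nontrivial diagonal matrices that commute formally with the flow. The global coherence across multiple periodic orbits---Burslem's mechanism in the diffeomorphism case---is what eliminates these ambiguities, and its flow adaptation must confront the additional freedom of time reparametrization. A secondary challenge is to verify that the Kupka--Smale conditions (distinctness of periods and non-resonance of the periodic spectra) are $C^1$-dense \emph{within} each of the four symmetry classes; for symplectic and contact flows this requires category-preserving closing and spectrum-perturbation tools of Zehnder type.
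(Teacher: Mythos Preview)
Your outline has two genuine gaps, and both stem from missing the paper's key reduction.

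\textbf{First gap: openness.} The theorem asks for a \(C^1\)-\emph{open} dense set of flows with trivial centralizer (that is what ``\(C^1\)-stably'' means). Your Kupka--Smale-type conditions---all periodic orbits hyperbolic, all prime periods distinct, non-resonant spectra---are \(C^1\)-residual but not \(C^1\)-open: under perturbation, new periodic orbits may appear with colliding periods or resonant spectra. So your intersection with the stably-accessible set yields at best a residual set, not the open dense set the statement requires. The paper instead isolates a \emph{single} transverse closed orbit, and transversality (no eigenvalue \(1\) for the linearized return map) is a \(C^1\)-open condition that keeps that orbit robustly isolated among orbits of comparable period.

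\textbf{Second gap: the rigidity step.} You try to show directly that \emph{every} commuting \(g\) is a time-\(s\) map, and you end up stuck on the diagonal ambiguity in the linearized centralizer, appealing to an unspecified ``global coherence across multiple periodic orbits''. The paper avoids this entirely via Proposition~\ref{PRPCentralizerCriterion}: to get trivial \emph{flow}-centralizer it suffices that the diffeomorphism-centralizer be \emph{discrete}, i.e., that every commuting \(g\) \emph{sufficiently \(C^0\)-close to the identity} be some \(\varphi^\tau\). With that smallness in hand, after translating so that \(h\dfn g\circ\varphi^{-\tau}\) fixes a point \(q\) on the distinguished periodic orbit, the propagation along strong leaves is a one-line estimate (Lemmas~\ref{LEMBurslem52},~\ref{LEMBurslem53}): \(h(x)\in W^u(q)\) and \(d(\varphi^t(x),\varphi^t(h(x)))=d(\varphi^t(x),h(\varphi^t(x)))<d_{C^0}(h,\id)<\epsilon\) for all \(t\), which is impossible for two distinct points on the same strong unstable leaf. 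No Sternberg linearization, no non-resonance, and no multi-orbit coherence are needed. Accessibility then carries \(h=\id\) to all of \(M\).

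A smaller technical point: your sentence ``commuting \ldots\ with the full \(\R\)-action of \(\phi\) on this leaf'' is not well-posed, since \(\phi^t\) maps \(W^{ss}(p)\) to \(W^{ss}(\phi^t(p))\), a different leaf unless \(t\) is a multiple of the period; only the return map acts on a fixed strong leaf.
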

There are more direct applications of accessibility:
\begin{corollary}[Generic transitivity]\label{CORGenericTransitivity}
On any smooth compact manifold \(M\) and for any \(r\ge1\), the set of topologically transitive \(C^r\)-flows is \(C^1\) open and dense among 
\begin{itemize*}[before=\quad,itemjoin=\quad]
\item volume-preserving
\item symplectic
\item contact
\end{itemize*}\quad
partially hyperbolic \(C^r\) flows on \(M\).
\end{corollary}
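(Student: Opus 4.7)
The plan is to deduce the corollary from Theorem~\ref{THMOpenDenseAccessibility} via the classical implication, due to Brin in the diffeomorphism setting, that a volume-preserving partially hyperbolic system with the accessibility property is topologically transitive. First, I observe that each of the three listed categories sits inside the volume-preserving partially hyperbolic flows: a symplectic flow preserves the Liouville volume $\omega^n$, and the Reeb flow of a contact form $\alpha$ preserves $\alpha\wedge(d\alpha)^{n-1}$. Thus it suffices to produce, inside each category, a $C^1$-open $C^1$-dense subset of volume-preserving partially hyperbolic flows that are topologically transitive. This also explains the absence of the \emph{all} category here: without invariant volume there is no Poincar\'e recurrence to combine with accessibility.

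Next I would establish the Brin-type implication for flows: a volume-preserving partially hyperbolic $C^r$ flow $\varphi_t$ that is accessible is topologically transitive. Given a nonempty open $\varphi$-invariant set $U$, Poincar\'e recurrence produces a full-measure set of two-sided recurrent points in $U$. The local product structure of the strong stable and unstable foliations, together with such recurrence, forces the closure $\overline{U}$ to be saturated by $su$-accessibility, in the sense that every point $su$-accessible from a recurrent point of $U$ lies in $\overline{U}$. Since there is a single accessibility class by hypothesis, $\overline{U} = M$; chasing this through a countable basis of open sets yields a dense $\varphi$-orbit, hence topological transitivity. This is the direct flow analogue of Brin's argument, and it goes through cleanly because the flow direction lies in $E^c$, so $\varphi_t$-invariance interacts correctly with propagation along $su$-paths.

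Finally I would assemble the pieces. By Theorem~\ref{THMOpenDenseAccessibility}, $C^1$-stably accessible flows are $C^1$-dense in each of the three categories, and the previous step shows any such flow is topologically transitive; since both stable accessibility and volume preservation persist under $C^1$-small perturbations inside these categories, such flows are in fact $C^1$-stably topologically transitive. The set of $C^1$-stably topologically transitive flows is $C^1$-open by definition and $C^1$-dense by the above, yielding the asserted $C^1$-open dense subset of transitive flows in each category.

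The main obstacle is the second step: carrying Brin's argument over to volume-preserving partially hyperbolic flows without incurring extra regularity hypotheses such as $C^2$ or absolute continuity of a center foliation. The first and third steps are essentially bookkeeping given Theorem~\ref{THMOpenDenseAccessibility}.
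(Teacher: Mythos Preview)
Your proposal is correct, and the overall architecture---reduce to the volume-preserving setting, invoke Brin's accessibility-implies-transitivity principle, and wrap up with Theorem~\ref{THMOpenDenseAccessibility}---matches the paper's. The genuine difference is in how you handle the middle step. You propose to rerun Brin's argument \emph{for flows}: Poincar\'e recurrence plus $su$-propagation along accessibility paths shows any open invariant set is dense. The paper instead sidesteps this entirely by passing to the time-$1$ map: if $\Phi$ is an accessible volume-preserving partially hyperbolic flow, then $\varphi^1$ is an accessible volume-preserving partially hyperbolic diffeomorphism, so the existing diffeomorphism result \cite{Brin}, \cite[Theorem 8.3]{PesinPHLectures} already gives that almost every point has a dense $\varphi^1$-orbit, hence a fortiori a dense $\Phi$-orbit. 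This is shorter and, more to the point, completely dissolves what you flagged as ``the main obstacle'': there is no need to port Brin's argument to flows or to worry about regularity hypotheses, because the diffeomorphism theorem is invoked off the shelf. Your route is a touch more self-contained, but the paper's is the cleaner deduction.
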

\begin{proof}
Theorem \ref{THMOpenDenseAccessibility} provides an open dense set of accessible such flows; their time-1 maps are accessible volume-preserving partially hyperbolic diffeomorphisms for which almost every point has a dense orbit \cite{Brin}, \cite[Theorem 8.3]{PesinPHLectures}; those points then have dense flow-orbits.
\end{proof}
Indeed, strong ergodic properties, such as the K-property \cite[Definition 3.4.2]{FH}, are similarly common if one adds the assumption of center-bunching \cite{BurnsWilkinson}:
\begin{corollary}[K-property]\label{COR}
On any smooth compact manifold \(M\) and for any \(r\ge2\), the set of \(C^r\)-flows for which the natural volume has the K-property (for all time-$t$ maps) is \(C^1\) open and dense among 
\begin{itemize*}[before=\quad,itemjoin=\quad]
\item volume-preserving
\item symplectic
\item contact
\end{itemize*}\quad
center-bunched partially hyperbolic \(C^r\) flows on \(M\).
\end{corollary}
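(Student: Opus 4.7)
The plan is to reduce to the time-\(t\) maps and invoke the Burns--Wilkinson Kolmogorov theorem \cite{BurnsWilkinson}. Fix a smooth compact manifold \(M\) and \(r \ge 2\). First, apply Theorem \ref{THMOpenDenseAccessibility} to produce a \(C^1\)-open dense set \(\mathcal U\) of \(C^1\)-stably accessible partially hyperbolic \(C^r\) flows in each of the three listed categories. Since center-bunching is a \(C^1\)-open condition within partially hyperbolic systems, intersecting \(\mathcal U\) with the center-bunched flows remains \(C^1\)-open and dense inside the center-bunched partially hyperbolic \(C^r\) flows on \(M\) in each category.

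Next, for a flow \(\varphi = (\varphi_t)_{t \in \R}\) in this intersection and any \(t \ne 0\), I would regard the time-\(t\) map \(\varphi_t\) as a \(C^r\) diffeomorphism. This map preserves the natural volume (the given smooth volume, the Liouville volume, or the contact volume, respectively), and it is partially hyperbolic with the same strong stable and unstable bundles \(E^s,E^u\) as the flow (the flow direction simply merges into the center bundle). Accessibility and center-bunching of \(\varphi\) both transfer directly to \(\varphi_t\): the former because accessibility is defined entirely in terms of \(E^s\)- and \(E^u\)-paths, the latter because the relevant multiplicative rates exponentiate by \(t\) while the center-bunching inequality is preserved. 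With \(r \ge 2\), the Burns--Wilkinson theorem \cite{BurnsWilkinson} then yields that \(\varphi_t\) has the Kolmogorov property with respect to the natural volume.

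Openness is immediate: within each category, \(C^1\)-stable accessibility is \(C^1\)-open by construction, center-bunching is \(C^1\)-open, and the relevant volume structure is preserved within the category, so the K-property for all nonzero time-\(t\) maps holds on the same \(C^1\)-neighborhood. The main subtlety rather than an obstacle will be the bookkeeping to check that flow center-bunching does indeed transfer to diffeomorphism center-bunching for each time-\(t\) map, and to confirm in the symplectic and contact cases that the ``natural volume'' is the smooth invariant measure to which Burns--Wilkinson applies.
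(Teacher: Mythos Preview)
Your proposal is correct and follows essentially the same approach as the paper: invoke Theorem \ref{THMOpenDenseAccessibility} to obtain a \(C^1\)-open dense set of stably accessible flows, then apply the Burns--Wilkinson theorem \cite[Theorem 0.1]{BurnsWilkinson} to the time-\(t\) maps. The paper's proof is more terse, but your added remarks about transferring accessibility and center-bunching to the time-\(t\) maps and identifying the natural volume in each category are the right details to fill in.
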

\begin{proof}
Theorem \ref{THMOpenDenseAccessibility} provides an open dense set of accessible such flows; for any \(t\neq0\) their time-$t$ maps are accessible volume-preserving center-bunched partially hyperbolic diffeomorphisms and hence have the K-property \cite[Theorem 0.1]{BurnsWilkinson}.
\end{proof}
As these corollaries indicate, accessibility is central to the ergodic theory of partially hyperbolic dynamical systems (Definition \ref{DEFparthypnarrow}), the theory of which was revived 2 decades after its founding \cite{BrinPesinAnnPH,BrinPesinPH} when Pugh and Shub sought nonhyperbolic examples of volume-preserving dynamical systems that are \emph{stably} ergodic \cite{PughShub1,PughShub2,PughShub3,PughShub4}. Stable ergodicity was established by adapting the Hopf argument \cite[\S 7.1,2]{FH}, and accessibility is the key ingredient \cite{BurnsWilkinson}.

In this context, flows (as opposed to diffeomorphisms) have received little attention: accessibility of a partially hyperbolic flow and of its time-1 map are equivalent, so a theory for diffeomorphisms suffices for establishing ergodicity for partially hyperbolic flows; indeed, the initial examples of stably ergodic partially hyperbolic diffeomorphisms were time-1 maps of hyperbolic flows. Moreover, the \emph{stability} of ergodicity and accessibility of the time-1 map implies their stability for the flow.

Considering flows becomes salient, however, when investigating the \emph{prevalence} of (stable) accessibility and ergodicity. Diffeomorphisms are rarely the time-1 map of a flow, so density or genericity results for diffeomorphisms do not automatically imply like results for flows. The issue is that one needs to argue that the flows themselves rather than just their time-1 maps can be perturbed in a desired fashion.
\subsection*{Centralizers}
Our interest in accessibility arose from a desire to understand the centralizer of flows beyond hyperbolic ones \cite{BakkerFisherHasselblatt,FH}. Partially hyperbolic flows are a natural next step, and this led us to wanting to adapt the pertinent result \cite[Theorem 1.2]{Burslem} to flows---and its proof uses accessibility in an essential way.

The centralizer of a dynamical system reflects the symmetries of that system, and this leads to the expectation that the centralizer of a (sufficiently complex, see \cite[Example 1.8.8]{FH}) dynamical system is often small. (It also reflects nonuniqueness of conjugacies \cite[p.\ 97]{FH}.) Since the notion is relative to an ambient group \cite[Remark 1.8.9]{FH}, we make the needed terminology explicit.
\begin{definition}\label{DEFTrivialCentralizer}
The \emph{flow-centralizer} of a \(C^r\) flow consists of the \(C^r\) flows that commute with it, and there are different types of triviality \cite{LOS18}. We say that the centralizer is \emph{trivial} if it consists of constant scalings of the flow, i.e., the generating vector fields of commuting flows are constant scalings of the given vector field; in this case we also say that the flow is \emph{self-centered}. 

A flow has \emph{quasi-trivial} flow-centralizer if commuting vector fields are related to the given one by a smooth scalar factor. 
\end{definition}
\begin{remark}[{Hyperbolicity implies small centralizer \cite[\S9.1]{FH}}]
An Anosov flow has trivial flow-centralizer \cite[Corollary 9.1.4]{FH}, and this extends to kinematic-expansive flows on a connected space with at most countably many chain-components, all of which are topologically transitive \cite[Theorem 9.1.3]{FH}. 

\emph{Quasi}-triviality of the centralizer holds for (Bowen-Walters) expansive flows \cite{Oka76} and indeed \(C^r\)-generic flows \cite{LOS18} (including volume-preserving ones \cite{BV19}).

An open and dense subset of \(C^\infty\) Axiom-A flows with a strong transversality condition has (properly) trivial flow-centralizer \cite{Sad79}, as do transitive Komuro expansive flows \cite{BRV18} (this includes the Lorenz attractor), and \(C^1\)-generic sectional Axiom-A flows \cite{BV18,BCW09}.

Indeed, hyperbolic flows usually have small centralizers \cite[Theorem 9.1.3]{FH}, and we extend
this and the requisite accessibility result to partially hyperbolic flows. 

One can also look for diffeomorphisms that commute with a flow; the set of these is the \emph{diffeomorphism-centralizer} of the flow.  Even Anosov flows can have nontrivial diffeomorphism-centralizers \cite[Section 5]{Obata}.
\end{remark}
While there has been interesting work beyond the hyperbolic context (the centralizer is quasi-trivial for a \(C^1\)-generic flow with at most finitely many sinks or sources \cite{Obata}, trivial if the flow moreover has at most countably many chain-recurrence classes), our results produce \emph{open} dense sets with the desired properties, whereas elsewhere, often only ``residual'' is known.

If one thinks of the centralizer question as the possibility of embedding a flow in a faithful \(\mathbb R^2\)-action (or a diffeomorphism into a faithful \(\mathbb Z^2\)-action), then a deeper probe could focus on the classification (or rigidity) of \(\mathbb R^k\)-actions for \(k\ge2\), e.g., aiming to show that they are necessarily algebraic. Great efforts have already been devoted to this aim \cite{KatokSpatzier5,DamjanovicXu}, and quite recently, these have been pushed into the partially hyperbolic realm---for discrete time (for smooth, ergodic perturbations of certain algebraic systems, the smooth centralizer is either virtually \(\mathbb Z^l\) or contains a smooth flow \cite{DamjanovicWilkinsonXu,BarthelmeGogolev}). Also, the centralizer of a partially hyperbolic \(\mathbb T^3\)-diffeomorphism homotopic to an Anosov automorphism is virtually trivial unless the diffeomorphism is smoothly conjugate to its linear part \cite{GanShiZhang}.\\

\subsection*{Acknowledgements}
We would like to thank Sylvain Crovisier, Jana Rodr\'iguez Hertz, Raul Ures, and Amie Wilkinson for helpful suggestions.

\section{Background}

In this section we review needed definitions and previous results.   \cite{FH} provides much of the basic background, but we define two main notions here.
\begin{definition}[Partially hyperbolic]\label{DEFparthypnarrow}
An \emph{embedding} $f$ is said to be \emph{(strongly) partially hyperbolic} on  a compact $f$-invariant set \(\Lambda\)\index{partially hyperbolic} if there exist
numbers $C>0$,
\begin{equation}\label{splitting23}
0<\lambda_1\le\mu_1<\lambda_2\le\mu_2 <\lambda_3\le\mu_3\text{ with }\mu_1<1<\lambda_3
\end{equation}
and an invariant splitting into nontrivial
stable\index{distribution!stable}\index{stable!distribution},
central\index{distribution!central} and
unstable\index{distribution!unstable}\index{unstable!distribution} subbundles 
\begin{equation}\label{splitting21}
T_xM=E^s (x)\oplus E^c(x)\oplus E^u (x),\quad
d_xf E^\tau(x)=E^\tau(f(x)),\ \tau=s,c,u
\end{equation}
such that 
if $n\in\N$, then
\begin{align*}
C^{-1}\lambda_1^n\le\normconorm{d_xf^n\restriction E^s(x)}&\le C\mu_1^n,\\
C^{-1}\lambda_2^n\le\normconorm{d_xf^n\restriction E^c(x)}&\le C\mu_2^n,\\
C^{-1}\lambda_3^n\le\normconorm{d_xf^n\restriction E^u(x)}&\le C\mu_3^n.
\end{align*}
In this case we set $E^{cs}\dfn E^c\oplus E^s$ and $E^{cu}\dfn E^c\oplus E^u$.

A \emph{flow} is said to be partially hyperbolic on a compact flow-invariant set if its time-1 map is partially hyperbolic on it, and \emph{uniformly hyperbolic} if the center direction of the time-1 map consists only of the flow direction. In either case we say that a dynamical system is partially hyperbolic on an invariant set.
\end{definition}
For a partially hyperbolic set $\Lambda$ and $x\in \Lambda$ there exist local stable and unstable manifolds that define global stable and unstable manifolds denoted by 
$W^s(x)$ and $W^u(x)$ respectively. 
\begin{remark}[Persistence of partial hyperbolicity]\label{REMPersistence}
For a flow $\Phi$ and a partially hyperbolic set \(\Lambda\) for $\Phi$ with splitting  $T_\Lambda M=E^u\oplus E^c\oplus E^s$ and continuous invariant cone fields $\mathcal{C}^u, \mathcal{C}^s, \mathcal{C}^{cu}$ and $\mathcal{C}^{cs}$, containing  $E^u, E^s, E^{cu}$ and $E^{cs}$, respectively, there exist neighborhoods $U_0$ of \(\Lambda\) and $\mathcal{U}_0$ of $\Phi$ and cone fields $\mathcal{C}_0^u, \mathcal{C}_0^s, \mathcal{C}_0^{cu}$, and $\mathcal{C}_0^{cs}$ over $U_0$ such that if $\Psi\in\mathcal{U}_0$ and $\Lambda'\subset U_0$ is a compact \(\Psi\)-invariant set, then $\Lambda'$ is partially hyperbolic with a splitting $T_\Delta M=E^s_\Psi\oplus E^c_\Psi\oplus E^u_\Psi$ such that $E^u_\Psi$, $E^s_\Psi$, $E^{cu}_\Psi$, and $E^{cs}_\Psi$ are contained in $\mathcal{C}_0^u, \mathcal{C}_0^s, \mathcal{C}_0^{cu}$, and $\mathcal{C}_0^{cs}$, respectively.  

To avoid confusion we will sometimes refer to these neighborhoods as $U_0(\Phi, \Lambda)$ and $\mathcal{U}_0(\Phi, \Lambda)$ when we consider different flows and different partially hyperbolic sets.
\end{remark}
\begin{definition}[Accessibility]\label{DEFAccessibility}
Two points $p,q$ in a partially hyperbolic set $\Lambda\subset M$ are  \emph{accessible}\index{accessible points} if there are points $z_i\in M$ with $z_0=p$, $z_\ell=q$, such
that $z_i\in V^\alpha(z_{i-1})$ for $i=1,\dots,\ell$ and $\alpha=s$ or
$u$. The collection of points $z_0,z_1,\dots,z_\ell$ is called the
$us$-\emph{path} \index{us-path@\textit{us}-path} connecting \(p\) and $q$
and is denoted variously by $[p,q]_f=[p,q]=[z_0,z_1,\dots,z_\ell]$. (Note
that there is an actual path from \(p\) to $q$ that consists of pieces of
smooth curves on local stable or unstable manifolds with the $z_i$ as
endpoints.)

Accessibility is an equivalence relation and the collection of points 
accessible from a given point \(p\) is called the \emph{accessibility class}
\index{accessibility class} of \(p\).


A partially hyperbolic set \(\Lambda\) is \emph{bisaturated} if $W^u(x)\subset \Lambda$ and $W^s(x)\subset \Lambda$ for all $x\in \Lambda$,
 and a bisaturated partially hyperbolic set is said to be \emph{accessible}\index{accessible} if the accessibility class of any point is
the entire set, or, in other words, if any two points are
accessible.

If the entire manifold is partially hyperbolic for a flow, then it is bisaturated.  In this case,  the flow  is \emph{accessible} if the entire manifold is an accessibility class.

A pair $(\Phi, \Lambda)$ of a dynamical system and a partially hyperbolic set of it is \emph{accessible} on $X\subset M$ if for every $p\in X\cap \Lambda$ and $q\in X$ there is an $su$-path from \(p\) to $q$.  If $\Lambda$ is bisaturated, this implies that either $X\cap\Lambda=\emptyset$ or $X\subset\Lambda$.
Furthermore, a pair $(\Phi, \Lambda)$ of a dynamical system and a partially hyperbolic set of it is \emph{stably accessible} on $X\subset M$ if there exist neighborhoods $U$ of \(\Lambda\) and $\mathcal{U}$ of $\Phi$ such that if $\widetilde{\Phi}\in \mathcal{U}$ and $\widetilde\Lambda\subset U$ is a $\widetilde{\Phi}$-invariant bisaturated compact set, then $(\widetilde{\Phi}, \widetilde\Lambda)$ is accessible on $X$.
\end{definition}
Although we are interested in flows that are partially hyperbolic over the entire manifold to obtain our main results, our general result on accessibility (Theorem \ref{t.bisaturated}) holds for bisaturated partially hyperbolic sets.

We obtain accessibility (Theorem \ref{THMOpenDenseAccessibility}) by adapting from
\cite{DolgopyatWilkinson,AvilaCrovisierWilkinson} the proof of
\begin{theorem}[{Avila--Crovisier--Dolgopyat--Wilkinson \cite[Main Theorem]{DolgopyatWilkinson}, \cite[footnote p.\ 13]{AvilaCrovisierWilkinson}}]\label{THMAccessibility}
If \(M\) is a smooth compact manifold and \(r\ge1\), then stable accessibility
is \(C^1\) dense among\begin{itemize*}[before=\quad,itemjoin=\quad]
\item all
\item volume-preserving
\item symplectic
\end{itemize*}\quad
partially hyperbolic \(C^r\) diffeomorphisms of \(M\).
\end{theorem}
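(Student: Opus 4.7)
The plan is to follow the strategy of Dolgopyat--Wilkinson, refined in the symplectic and general cases by Avila--Crovisier--Wilkinson. The first structural fact I would establish is that for a partially hyperbolic diffeomorphism $f$, the accessibility classes $AC(x)$ form an $f$-invariant partition of $M$, and for each $x$ the class $AC(x)$ is either open or has empty interior (a consequence of the local product structure of $W^s$ and $W^u$ along center transversals). Stable accessibility is thus equivalent to the $us$-foliation being minimal in a robust way, and the goal becomes: by an arbitrarily small $C^1$ perturbation inside the prescribed category, destroy all non-open accessibility classes.

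For the heart of the argument I would fix a diffeomorphism $f$ whose non-open accessibility classes form a nonempty closed invariant set $K$. Pick $x\in K$ with dense orbit in its class and consider $us$-loops based at $x$. The holonomy of an $su$-loop along a center transversal measures the failure of the loop to close accessibility-trivially, so if every loop has trivial holonomy, one can produce an explicit local $C^1$ perturbation $g$ of $f$, supported in a small box disjoint from finitely many iterates of $x$, whose effect is to tilt one leg of an $su$-quadrilateral through the box and thereby enlarge $AC(x)$ to include a full transversal disk. Iterating this local perturbation along a finite orbit segment and using compactness of $M$ yields a $g$ arbitrarily $C^1$-close to $f$ with $AC_g(x)=M$. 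The stability of this property under further $C^1$-small perturbations is the classical semi-continuity of accessibility classes from above, so one gets a $C^1$-neighborhood of $g$ inside which accessibility is stable, proving $C^1$-density.

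The main obstacle, and the reason the paper cites this as a single packaged theorem, is carrying out the perturbation inside each structural class: in the general case one can freely bump one coordinate of $f$ in a small box, but in the volume-preserving case one needs a divergence-free local vector field tangent to the center direction, and in the symplectic case one needs the bump to come from a local Hamiltonian whose gradient is center-directed. In each case the construction is elementary but the estimates must be uniform so that the $C^1$-size of the perturbation can be made arbitrarily small while still producing a definite change in the $su$-holonomy. A secondary obstacle is the possibility that $K$ carries no dominated splitting refining $E^c$, which is exactly the scenario handled in \cite{AvilaCrovisierWilkinson} by a Baire-category argument producing the required loops generically.

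In summary, the proof I envision has three movements: (i) establish the dichotomy that accessibility classes are open or meager and reduce to enlarging a non-open class; (ii) produce a category-respecting $C^1$-small local perturbation that opens the class, iterating along an orbit to cover $M$; (iii) invoke semi-continuity to convert density of accessibility into $C^1$-density of \emph{stable} accessibility. The hardest single step is (ii), specifically the construction of a structure-preserving bump whose effect on center holonomy is quantitatively lower-bounded by its $C^1$-size.
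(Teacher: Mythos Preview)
The paper does not prove this theorem; it is quoted as a background result and attributed to \cite{DolgopyatWilkinson,AvilaCrovisierWilkinson}. What the paper \emph{does} contain is the adaptation of the Avila--Crovisier--Wilkinson proof to flows (Section~3), and from that adaptation one can read off the structure of the original argument. Your sketch departs from that structure in substantive ways.

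The ACW architecture, as mirrored in the paper, is not organized around the open/meager dichotomy for accessibility classes, nor around following a single orbit. Instead it runs as follows: (a) one builds a \emph{$c$-admissible disk family} $\mathcal D$ that is a stable global $c$-section, i.e., robustly meets every bisaturated partially hyperbolic set (Proposition~\ref{p:stablesections}); (b) near each disk $D\in\mathcal D$ one finds auxiliary points $z_1,\dots,z_\ell$ with robust $su$-paths from $D$ into small balls around the $z_i$ (Lemma~\ref{l.smallerballs}); (c) one performs many disjoint local perturbations near the $z_i$ so that Brin quadrilaterals acquire a controlled center displacement, which is packaged as ``$\theta$-accessibility'' (Lemma~\ref{l.perturbflow}, Proposition~\ref{p.quadrilateral}); (d) $\theta$-accessibility on $f_p(B(z_i,2d\epsilon))$ implies accessibility on $f_p(B(z_i,\epsilon))$ (Lemma~\ref{l.centeraccessibility}), which then propagates to accessibility on each $D$, and since the $\mathcal D$ are $c$-sections this gives accessibility of any bisaturated set. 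Stability is built in because all the $su$-path and quadrilateral constructions are robust under $C^1$-small changes.

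Against this, your plan has genuine gaps. The step ``pick $x\in K$ with dense orbit in its class'' is unjustified: a non-open accessibility class need not support a point whose orbit is dense in it. The step ``iterating this local perturbation along a finite orbit segment and using compactness of $M$ yields $AC_g(x)=M$'' is the crux and is not argued; a single orbit segment does not obviously visit enough of $M$ to make one accessibility class swallow everything, and this is precisely why ACW replace orbit-following by $c$-sections plus many simultaneous local perturbations. Finally, your remark that \cite{AvilaCrovisierWilkinson} handles the case ``$K$ carries no dominated splitting refining $E^c$'' by a Baire-category argument does not match their paper; no such dichotomy or genericity step appears, and the $c$-section/$\theta$-accessibility mechanism works uniformly. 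Where you are on target is the identification of the hardest step: constructing a category-preserving local perturbation (divergence-free, Hamiltonian, etc.) whose effect on the holonomy of a Brin quadrilateral is quantitatively bounded below---this is exactly Lemma~\ref{l.perturbflow} in the paper's flow version.
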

%
From Theorem \ref{THMOpenDenseAccessibility}, we obtain Theorem \ref{THMTrivFlowCentralizer} by adapting lemmas from the proof of
\begin{theorem}[{Burslem \cite[Theorem 1.2]{Burslem}}]\label{THMBurslem}
In the set of \(C^r\) partially hyperbolic diffeomorphisms of a compact manifold
\(M\) (\(r\ge1\)), there is a \(C^1\)-open and \(C^1\)-dense subset \(V\) whose elements all have
discrete diffeomorphism-centralizer.
\end{theorem}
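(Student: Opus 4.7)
The plan is to locate a $C^1$-open and $C^1$-dense set $V$ of partially hyperbolic diffeomorphisms whose elements all have discrete centralizer. Since any one-parameter subgroup $\{h_t\}\subset Z(f)$ corresponds to a nonzero $df$-invariant $C^r$ vector field $X$, discreteness of $Z(f)$ reduces to two conditions: (A) $f$ admits no nonzero invariant $C^r$ vector field, ruling out continuous subgroups of $Z(f)$; and (B) the remaining discrete elements of $Z(f)$ have no accumulation point in $\mathrm{Diff}^r(M)$. Both conditions will be arranged together with stable accessibility provided by Theorem~\ref{THMAccessibility}.

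For condition (A), I would observe that invariance $(df)X=X\circ f$ forces $X$ to lie in the center bundle $E^c$ (sections of $E^s$ and $E^u$ must decay or grow under $f$), and then use a Franks-type $C^1$-small perturbation at a hyperbolic periodic point $p$ to break any nontrivial invariant section of $E^c$; this is a $C^1$-open condition because the space of invariant vector fields depends semi-continuously on $f$ and a transversal perturbation at $p$ keeps it trivial. For condition (B), I would arrange by further $C^1$-small perturbation a Kupka--Smale regime: $p$ has period $n$ with $d_pf^n$ having simple, nonresonant, multiplicatively independent eigenvalues, and only finitely many periodic points of each period. Each of these three properties (accessibility, (A), (B)) is individually $C^1$-open and $C^1$-dense in each of the invariant categories, and $V$ is taken to be their intersection; openness and density of $V$ are then automatic.

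Given $f\in V$ and $g\in Z(f)$, the relation $g\circ f=f\circ g$ forces $g$ to send $W^s(x)$ to $W^s(g(x))$ and $W^u(x)$ to $W^u(g(x))$, hence to preserve the partition into accessibility classes; since $f$ is accessible, this gives no global constraint by itself, but $g$ must permute the finitely many $n$-periodic points, so after composition with a power of $f$ we may assume $g(p)=p$. The germ of $g$ at $p$ then commutes with the germ of $f^n$ at $p$; by Sternberg linearization in the nonresonant case, smooth commutants are conjugate to the linear centralizer of $d_pf^n$, and condition (A) eliminates the continuous part of this centralizer, leaving a discrete set of admissible germs. Stable accessibility then propagates the germ of $g$ at $p$ to a globally determined $g$: every $y\in M$ is reached from $p$ by an $su$-path, and $g$ is equivariant under the smooth stable and unstable holonomies along such a path, so $g(y)$ is determined by the germ data at $p$ together with the discrete choice of $g(p)$.

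The main obstacle is the propagation step: one has to verify that the holonomy-equivariance equations along $su$-loops based at $p$ admit only the already-discrete germ-centralizer solutions, i.e., a cocycle rigidity statement for the accessibility groupoid at $p$. The nonresonant hyperbolic structure at $p$ is the mechanism that confines the monodromy to the finite linear centralizer of $d_pf^n$, but realizing accessibility, nonresonance, Kupka--Smale, and condition (A) simultaneously by a single $C^1$-small perturbation, while preserving the relevant invariant structure (volume, symplectic, or contact), requires Franks-type local perturbations tailored to each category exactly as in the perturbative part of Theorem~\ref{THMAccessibility}. This simultaneous realization, together with the cocycle-rigidity argument, is where I expect the genuinely delicate work to lie.
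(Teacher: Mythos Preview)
The paper does not itself prove Theorem~\ref{THMBurslem} (it is quoted from Burslem), but the adaptation to flows in Section~4 makes Burslem's strategy visible, and it differs substantially from yours. Both approaches share the outer scaffolding: produce a periodic point \(p\) that is isolated among those of its period, arrange stable accessibility, observe that any \(g\in Z(f)\) close to the identity must fix \(p\), and then propagate along \(su\)-paths. The divergence is in how one shows that a commutant \(g\) fixing \(p\) and \(C^0\)-close to the identity must fix every point on \(W^u(p)\) (and then inductively every vertex of an \(su\)-path). You invoke Sternberg linearization at \(p\) and a ``cocycle rigidity for the accessibility groupoid,'' which you yourself flag as the delicate step. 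Burslem's argument, reproduced in Lemmas~\ref{LEMBurslem52} and~\ref{LEMBurslem53}, is far more elementary and bypasses all of that: if \(g(q)=q\) and \(x\in W^u(q)\), then \(g(x)\in W^u(q)\) and \(d(f^n(x),f^n(g(x)))=d(f^n(x),g(f^n(x)))<d_{C^0}(g,\mathrm{id})\) for all \(n\); expansivity along the unstable leaf then forces \(g(x)=x\). No linearization, no nonresonance, no holonomy cocycle---just the trivially available fact that two points on the same unstable leaf whose forward orbits stay uniformly close must coincide.

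Your route also has a genuine gap at the step you identify as hard. The linear centralizer of a generic \(d_pf^n\) with simple, multiplicatively independent eigenvalues is still a full torus of diagonal maps, so Sternberg linearization alone leaves a \emph{continuum} of admissible germs at \(p\); your condition~(A) concerns global \(df\)-invariant vector fields and does not by itself cut this down. You then need the monodromy of \(su\)-loops based at \(p\) to act with discrete stabilizer on this torus---that is precisely the ``cocycle rigidity'' you defer, and nothing in your setup guarantees it. The expansivity argument of Lemma~\ref{LEMBurslem52} avoids the issue entirely by showing directly that the germ of \(g\) at \(p\) is the identity germ, after which propagation along \(su\)-paths is mechanical.
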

\section{Accessibility}
The arguments of \cite{AvilaCrovisierWilkinson}  apply to the time-1 map of a partially hyperbolic flow---with one essential adaptation. We need to show that the perturbation that \(C^1\)-approximates a partially hyperbolic diffeomorphism by an accessible one \cite[Section 2.5]{AvilaCrovisierWilkinson} can be effected in such a way that it gives an accessible \emph{flow} as an approximation of a given partially hyperbolic \emph{flow}.

Theorem \ref{THMOpenDenseAccessibility} is a consequence of the following more general theorem, which corresponds to \cite[Theorem B]{AvilaCrovisierWilkinson}.

\begin{theorem}\label{t.bisaturated}
Let $\Lambda$ be a partially hyperbolic set for a flow $\Phi$ on a closed manifold $M$, and let $\mathcal{U}$ be a $C^1$ neighborhood of $\Phi$.  There exists a neighborhood $U$ of $\Lambda$ and a nonempty open set $\mathcal{O}\subset \mathcal{U}$ such that if $\Psi\in\mathcal{O}$ and $\Delta\subset U$ is a bisaturated partially hyperbolic set for $\Psi$, then $\Delta$ is accessible for $\Psi$.

Furthermore, this holds among volume-preserving, symplectic, and contact
flows.
\end{theorem}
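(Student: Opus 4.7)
My strategy is to reduce Theorem~\ref{t.bisaturated} to the diffeomorphism result of Avila--Crovisier--Wilkinson (Theorem~\ref{THMAccessibility}) by showing that every $C^1$-small local perturbation used in the ACW proof can be realized as the time-1 map of a perturbation of the generating vector field in each of the four categories. The reduction is legitimate because accessibility of a flow on a bisaturated set $\Delta$ coincides with accessibility of its time-1 map on $\Delta$ (the flow direction lies in $E^c$, and the stable/unstable foliations agree for the flow and its time-1 map), and because partial hyperbolicity is $C^1$-open (Remark~\ref{REMPersistence}): for any $\Psi\in\mathcal{U}$ sufficiently close to $\Phi$, the invariant splitting and foliations over any bisaturated $\Psi$-invariant compact $\Delta\subset U$ are $C^0$-close to those of $\Phi$.

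The core construction in \cite{AvilaCrovisierWilkinson} is a $C^1$-small diffeomorphism perturbation supported in a small ball near a base point $p$, arranged to alter a prescribed holonomy along an $su$-cycle and thereby merge two previously separate accessibility classes. I would realize such a perturbation in the flow category by working in a flow box $T\cong D\times(-\delta,1+\delta)$ centered at $p$, with $D$ a codimension-one transversal. Given a target $C^1$-small diffeomorphism $h$ of a sub-disk of $D$, choose a compactly supported time-dependent vector field $V$ tangent to the $D$-factor on $D\times[\tfrac14,\tfrac34]$ whose flow at time $\tfrac12$ realizes $h$; then the perturbed vector field $X+V$ has time-1 map differing from $\Phi^1$ by exactly $h$ conjugated by the transverse component of the flow inside $T$, while the flow structure is unchanged outside $T$. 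Iterating this across the finitely many base points required by the ACW scheme produces a flow $\Psi_0\in\mathcal{U}$ whose time-1 map has the ACW ``transverse $su$-cycle'' property on $\Lambda$. The neighborhood $\mathcal{O}$ of $\Psi_0$ and the neighborhood $U$ of $\Lambda$ then arise from the $C^1$-openness of the ACW property combined with persistence of partial hyperbolicity.

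In the structured cases the perturbation $V$ is further constrained: divergence-free for volume preservation, exact Hamiltonian with a compactly supported time-dependent Hamiltonian in the symplectic case, and generated by a compactly supported contact Hamiltonian in the contact case. The main obstacle is the contact case, where $V$ is determined by a single scalar function and the induced transverse Poincar\'e perturbation must lie in a restricted (essentially symplectic) family on the transversal, since the transversal inherits a symplectic structure from $d\alpha$ restricted to the contact distribution. One must verify that contact Hamiltonians supported in a flow box produce a family of transverse perturbations rich enough to run the symplectic form of the ACW argument; this is a standard contact-geometric flexibility, as a contact Hamiltonian can be prescribed essentially freely on a hypersurface and extended. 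Once this is established, the ACW machinery applies directly to the time-1 maps of the flows in $\mathcal{O}$ and yields Theorem~\ref{t.bisaturated}.
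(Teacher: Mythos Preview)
Your plan differs from the paper's route. You aim to black-box the ACW diffeomorphism theorem by realizing each of its local perturbations of $\varphi^1$ through a flow-box modification of $X$; the paper instead re-runs the ACW local analysis directly at the level of vector fields. Propositions~\ref{p:stablesections} and~\ref{p:stableaccessibility} are flow analogs of \cite[Propositions 1.4, 1.3]{AvilaCrovisierWilkinson}, and the only genuinely new ingredient is Lemma~\ref{l.perturbflow}: rather than matching a prescribed diffeomorphism $h$, one simply tilts the vector field by a constant center vector $\alpha\eta v$ inside a small ball (the four categories are handled, respectively, by a bump interpolation, the pasting lemma, interpolation of local Hamiltonians in symplectic flow-box charts, and interpolation of rotated Darboux contact forms). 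This tilted field is then fed directly into the $\theta$-accessibility/Brin-quadrilateral computation (Proposition~\ref{p.quadrilateral} and the paragraphs following it). Your route would spare re-deriving Lemmas~\ref{l.smallerballs}--\ref{l.centeraccessibility} but requires a more delicate realization lemma.

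There is a concrete gap in your contact case. A contact Hamiltonian generates a contact vector field whose flow preserves $\ker\alpha$, but $X+V=R_\alpha+X_H$ is in general \emph{not} the Reeb field of any contact form, so your perturbed flow leaves the class of contact (i.e., Reeb) flows considered here. The paper instead perturbs the contact \emph{form} (via a rotated Darboux chart interpolated across an annulus) and takes the Reeb field of the new form; see the last paragraph of the proof of Lemma~\ref{l.perturbflow}. To repair your reduction you would have to show that every sufficiently small compactly supported symplectic isotopy of the transversal $(D,d\alpha\restriction_\xi)$ arises as the change in Poincar\'e return map under a compactly supported $C^1$-small perturbation of $\alpha$; this is plausible but is not the ``standard flexibility'' you invoke, and it is exactly the content that Lemma~\ref{l.perturbflow} supplies in the paper's approach. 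A secondary issue affects all four categories: your flow-box modification alters $\psi^1$ on the tube $\bigcup_{|t|\le 1}\varphi^t(\operatorname{supp}V)$ rather than on a ball, so you must also check that the tubes at the various $z_i$ are pairwise disjoint (this follows from $R(\mathcal D)>T$ after some bookkeeping) and, more importantly, that the tube-supported perturbation of $\varphi^1$ still produces the specific $su$-holonomy displacement the ACW quadrilateral argument needs. Neither point is handled in the plan.
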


\subsection*{Definitions}
We first review notation introduced in \cite{AvilaCrovisierWilkinson} before explaining the adaptations that need to be made to prove Theorems \ref{THMOpenDenseAccessibility} and \ref{t.bisaturated}.  We will use slightly different notation,  because we are using the notation for flows that they use for the charts.  
\begin{proposition}[Adapted charts]\label{PRPAdapted}
Let $M$ be a smooth manifold with $\dim(M)=d$.  For each point $p\in M$ there is a chart $f_p\colon B(0,1)\subset T_pM\to M$ with the following properties:
\begin{enumerate}
\item[(1)] The map $p\mapsto f_p$ is piecewise continuous in the $C^1$ topology.  So there are open sets $U_1,..., U_\ell \subset M$ and
\begin{itemize}
\item[--] compact sets $K_1,..., K_\ell$ covering $M$ with $K_i\subset U_i$,
\item[--] trivializations $g_i:U_i\times \mathbb{R}^d\to T_{U_i}M$ such that $g_i(\{p\}\times B(0,2))$ contains the unit ball in $T_pM$ for each $p\in U_i$, and
\item[--] smooth maps $F_i:U_i\times B(0,2)\to M$,
\end{itemize}
such that each $p\in M$ belongs to some $K_i$, with $f_p=F_i\circ g_i^{-1}$ on $B(0,1)\subset T_pM$.
\item[(2)] When a volume, symplectic, or contact form has been fixed on $M$, this pulls back under $f_p$ to a constant (and standard such) form on $T_pM$ \cite[Theorems 5.1.27, 5.5.9, 5.6.6]{KatokHasselblatt}.
\end{enumerate}
\end{proposition}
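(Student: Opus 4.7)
The plan is to construct $f_p$ locally by combining an exponential chart with a parametric Darboux correction. First I would use compactness of $M$ to choose finitely many open sets $U_1,\dots,U_\ell$ with compact $K_i\subset U_i$ covering $M$, each $U_i$ small enough that $TM|_{U_i}$ admits a smooth trivialization $g_i\colon U_i\times\R^d\to TM|_{U_i}$, with $g_i(\{p\}\times B(0,2))$ containing the unit ball of $T_pM$ in an auxiliary Riemannian metric.

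For the plain case (no form fixed), set $F_i(p,v)\dfn\exp_p(g_i(p,v))$, shrinking the radius uniformly over $K_i$ so that each $\exp_p$ is a diffeomorphism on the image. Then $f_p\dfn F_i\circ g_i^{-1}$ on $B(0,1)\subset T_pM$ depends smoothly on $p$ within each $K_i$ and hence piecewise continuously on $M$ in the $C^1$ topology, giving (1).

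For (2), with the fixed volume, symplectic, or contact form $\mu$, the pullback $(F_i(p,\cdot))^*\mu$ is a smooth form on $B(0,2)\subset\R^d$ whose value at $0$ varies smoothly with $p$. The cited Darboux theorems \cite[Theorems 5.1.27, 5.5.9, 5.6.6]{KatokHasselblatt} produce local diffeomorphisms $\phi_{p,i}$ normalizing such a form to a constant standard one. In each case the proof is based on Moser's path method (or Gray's stability trick for the contact form), which constructs $\phi_{p,i}$ as the time-one map of a time-dependent vector field built canonically from the interpolating family of forms. Since that construction is smooth in the form, and hence in $p$, I obtain a smoothly ($p$-)varying family $\phi_{p,i}$. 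A further smooth $p$-dependent linear automorphism of $\R^d$, obtained as a section of the subgroup of $GL(d,\R)$ preserving the standard form, absorbs the value at $0$ of the constant form into the standard form. Replacing $F_i(p,v)$ by $F_i(p,\phi_{p,i}(v))$ and adjusting $g_i$ accordingly yields the desired $f_p$.

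The main obstacle is extracting a uniform radius of definition for the Darboux correction over each $K_i$: a priori the Moser ODE is solvable only on a $p$-dependent neighborhood of $0$, and I need these neighborhoods to contain $B(0,1)$. Compactness of $K_i$ supplies a uniform positive lower bound, after which a single global rescaling of $g_i$ at the outset places the whole construction inside $B(0,1)$. The remainder is bookkeeping: verifying that every $p\in M$ lies in some $K_i$ (already guaranteed by the covering property) and that the composed formula for $f_p$ matches the normal form $F_i\circ g_i^{-1}$ stated in (1).
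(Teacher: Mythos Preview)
The paper does not actually prove this proposition: it is stated as background, with part (2) delegated entirely to the cited Darboux-type theorems in Katok--Hasselblatt. So there is no ``paper's own proof'' to compare against; your proposal is filling in a gap the authors chose to leave.

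As a sketch, your argument is sound and follows the standard route. The exponential-map construction gives the piecewise-continuous family of charts in (1), and the parametric Moser/Gray argument is indeed the mechanism behind the cited theorems for (2); your observation that the isotopy depends smoothly on the input form, hence on $p$, is the right way to get the family to vary well. You correctly isolate the only genuine issue---uniformity of the Darboux radius over each $K_i$---and dispatch it by compactness plus an initial rescaling of the trivialization. One small point worth tightening: in the contact case the Gray argument normalizes the \emph{contact structure}, not the contact form itself, so after the isotopy you have $\phi^*\alpha=e^h\alpha_0$ for some function $h$; you should note that a further conformal rescaling (or an explicit choice in the Moser path) is needed to get the pullback equal to the standard $\alpha_0$ on the nose, as the statement requires.
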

%
\begin{remark}
We note that given a compact set $K$ with continuous splitting $T_KM=E_1\oplus E_2$ there is a Riemannian metric with respect to which the norm of the projection from $E_2$ to $E_1$ is arbitrarily small.   Then the charts $F$ can be chosen so the bundles $E_1$ and $E_2$ are lifted in $B^d(0,3)$ to nearly constant bundles.
\end{remark}
\begin{remark}\label{REMDarboux}
For a volume, symplectic, or contact form, the standard chart expresses that form, respectively, as
\begin{itemize}
\item\(\displaystyle dx_1\wedge\dots\wedge dx_d\),
\item\(\displaystyle\sum_{i=1}^{d/2}dx_i\wedge dy_i\),
\item\(\displaystyle\alpha=dt+\sum_{i=1}^{(d-1)/2}x_idy_i\).
\end{itemize}
We note that a chart of the latter type is automatically of flow-box type: the Reeb vector field \(Y\) of  \(\alpha=dt+\sum_{i=1}^{(d-1)/2}x_idy_i\) is \(\partial/\partial t\) because it is (uniquely) defined by \(d\alpha(Y,\cdot)\equiv0\), \(\alpha(Y)\equiv1\).
\end{remark}

We do not assume dynamical coherence for the partially hyperbolic set \(\Lambda\) (the existence of a foliation tangent to $E^c$), and so we define approximate center manifolds that will be sufficient.  
\begin{definition}[$c$-admissible disk]
For sufficiently small $\eta>0$ and $p\in \Lambda$, denote by $B^c(0, \eta)$ the ball around 0 in $E^c_p$ of radius $\eta$. 
The set $V_\eta(p):= f_p(B^c(0, \eta))$ is a \emph{$c$-admissible disk} with radius $\eta=:r(V_\eta(p))$.  A \emph{$c$-admissible disk family} is a finite collection of pairwise disjoint, $c$-admissible disks.
\end{definition}
For $\beta\in (0,1)$ let $\beta V_\eta(p)$ be the $c$-admissible disk centered at \(p\) with radius $r(\beta V_\eta(p))=\beta\eta$.  
\begin{definition}[Return time]
The return time $R:\mathcal{P}(M)\to [0, \infty]$ is defined for a set $S\subset M$
that is contained in a flow-box (\cite[Definition 1.1.13]{FH}) of ``height'' \(\tau\) as the infimum of $t\in(\tau, \infty]$ such that $\varphi^t(S)\cap S\neq \emptyset$.
\end{definition}
Note that the above definition presumes that $S$ contains no fixed point of the flow. It implies that if $p\in M$ is not fixed, then $R(B_\eta(p))\lto{\eta\to0}\mathrm{per}(p)$ if we agree that $\mathrm{per}(p)=\infty$ if \(p\) is not periodic, and $\mathrm{per}(p)$ is the period of \(p\) for any periodic \(p\).

For a $c$-admissible disk family $\mathcal{D}$ and $\beta\in (0,1)$ we let
\[
\beta\mathcal{D}:=\{\beta D\,:\,D\in\mathcal{D}\},\qquad
 |\mathcal{D}|:=\bigcup_{D\in\mathcal{D}}D,\qquad
r(\mathcal{D}):=\sup_{D\in\mathcal{D}}r(D),\quad\textrm{and}\quad
R(\mathcal{D}):=R(|\mathcal{D}|).
\]
\subsection*{The Avila--Crovisier--Dolgopyat--Wilkinson arguments}
The proof of accessibility in \cite{AvilaCrovisierWilkinson} proceeds in two steps.  The first is a general fact for partially hyperbolic sets for diffeomorphisms on the existence of  $c$-admissible disk families
that stably meet all unstable and stable leaves as follows.  
\begin{definition}[Global $c$-section]
We say that a set $X\subset M$ is a \emph{(global) $c$-section} for $(\Phi, \Lambda)$ if $X\cap \Delta\neq \emptyset$ for every bisaturated subset $\Delta\subset \Lambda$.
\end{definition}
\begin{remark}
This terminology alludes to that of a (Poincar\'e) section for a flow, which meets bunches of orbits; (global) $c$-sections meet many stable and unstable leaves. Although this is not a defining property, the (global) $c$-sections we find will be transverse to stable and unstable leaves (Proposition \ref{p:stablesections}) and will indeed meet all stable and unstable leaves once accessible (Proposition \ref{p:stableaccessibility}).
\end{remark}
Via time-\(t\) maps, the result on the existence of such families immediately holds in our setting.
\begin{proposition}[{\cite[Proposition 1.4]{AvilaCrovisierWilkinson}}]\label{p:stablesections}
Let \(\Lambda\) be  partially hyperbolic set for \(\Phi\).  Then there exists a $\delta>0$ with the following property.  If $U$ is a neighborhood of \(\Lambda\) such that $\overline{U}\subset U_0(\Phi, \Lambda)$ and $T>0$, then there exists a $c$-admissible disk family $\mathcal{D}$ and $\sigma>0$ such that:
\begin{enumerate}
\item $r(\mathcal{D})< T^{-1}$,
\item $R(\mathcal{D})> T$ (this implies that \(|\mathcal{D}|\) contains no fixed point), and
\item if \, \(\Psi\) satisfies $d_1(\Phi, \Psi)<\delta$ and $d_0(\Phi, \Psi)<\sigma$, then for any bisaturated partially hyperbolic set $\Delta\subset U$ for \(\Psi\), the set $|\mathcal{D}|$ is a (global) $c$-section
 for $(\Psi, \Delta)$.
\end{enumerate}
\end{proposition}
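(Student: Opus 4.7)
The plan is to derive Proposition \ref{p:stablesections} from its diffeomorphism analog \cite[Proposition 1.4]{AvilaCrovisierWilkinson} by passing to the time-1 map of $\Phi$ and then accommodating the flow-specific return-time condition (2). Write $f \dfn \Phi^1$: by Definition \ref{DEFparthypnarrow}, $f$ is partially hyperbolic on $\Lambda$ with the splitting $E^s \oplus E^c \oplus E^u$ of the flow, and the stable and unstable manifolds coincide for $\Phi$ and $f$. Hence $c$-admissible disks are the same objects in both settings; any flow $\Psi$ sufficiently $C^1$-close to $\Phi$ produces a diffeomorphism $\Psi^1$ that is $C^1$-close to $f$, and any bisaturated $\Psi$-invariant compact set $\Delta \subset U$ is automatically bisaturated and $\Psi^1$-invariant.

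Applying \cite[Proposition 1.4]{AvilaCrovisierWilkinson} to $f$ on $\Lambda$ with the given $U$ and parameter $T$ yields $\delta_0 > 0$, a $c$-admissible disk family $\mathcal{D}_0$ with $r(\mathcal{D}_0) < T^{-1}$, and $\sigma_0 > 0$ such that $|\mathcal{D}_0|$ is a global $c$-section for every bisaturated $g$-invariant subset of $U$ whenever $d_1(f,g) < \delta_0$ and $d_0(f,g) < \sigma_0$. Choosing $\delta, \sigma > 0$ small enough that the corresponding closeness of flows implies closeness of their time-1 maps within the ACW thresholds, the previous paragraph gives conditions (1) and (3) for flow perturbations.

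It remains to arrange the return-time condition (2). Because the flow direction lies in $E^c$, each $c$-admissible disk $V_\eta(p)$ of small radius contains a segment of the $\Phi$-orbit of $p$ of length comparable to $\eta$, and by the remark following the definition of return time, $R(V_\eta(p)) \to \mathrm{per}(p)$ as $\eta \to 0$. Thus $R(\mathcal{D}) > T$ holds provided no disk center lies in the closed, nowhere-dense set $P_T$ of points on $\Phi$-periodic orbits of period at most $T$ (a countable union of one-dimensional circles in a manifold of dimension at least three). I would therefore perturb each disk $V_\eta(p) \in \mathcal{D}_0$ to $V_\eta(\Phi^s(p))$ for a generic small $s$ chosen to avoid $P_T$; the resulting family $\mathcal{D}$ still satisfies (1), and by the openness of the $c$-section property under small $C^0$-perturbations of the disk family, continues to satisfy (3) after shrinking $\delta$ and $\sigma$ if necessary.

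The main obstacle I anticipate is precisely this last point: verifying that a flow-shift of the disk centers preserves the global $c$-section property uniformly over a $C^1$-neighborhood of $\Phi$. This should follow from the transversality built into the ACW construction---$|\mathcal{D}_0|$ meets every bisaturated $\Delta$ robustly, and a small shift of disks along the flow preserves that robustness---but it is the step where the paper's claim that the result ``immediately holds'' via time-$t$ maps actually requires a short argument.
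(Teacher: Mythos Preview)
Your reduction to the diffeomorphism statement via the time-1 map correctly delivers conditions (1) and (3), but your treatment of the continuous return-time condition (2) has a gap. The quantity $R(\mathcal{D})$ is the return time of the \emph{union} $|\mathcal{D}|$, not the minimum over the individual disks: even if every center $p_i$ avoids $P_T$ and every $R(V_\eta(p_i))>T$, a point of one disk may flow into a \emph{different} disk in time much less than $T$ (if, say, $p_2=\Phi^{1/2}(p_1)$, then $R(\mathcal{D})\approx 1/2$ regardless of periodicity). Your proposed remedy---flow-shifting each center---cannot separate two centers that already lie on a common $\Phi$-orbit, and the discrete return-time bound you inherit from ACW for $f=\Phi^1$ only excludes inter-disk transitions at \emph{integer} times, which says nothing about fractional times. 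So the real obstacle is not the one you flag at the end (preserving (3) under a small shift) but rather that your scheme does not actually secure (2).

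The paper gives no detailed argument here; its one sentence (``via time-$t$ maps, the result on the existence of such families immediately holds'') is best read as asserting that the ACW \emph{proof}, not merely its statement, transfers to flows. Every ingredient---the splitting, the $c$-admissible disks, the stable and unstable manifolds, bisaturated sets---is already defined through time-$t$ maps, so one may rerun the ACW construction verbatim with the continuous return time replacing the discrete one. In that construction conditions (1)--(3) are arranged simultaneously (centers are selected and radii shrunk with the return-time constraint built in from the start), and no post-hoc adjustment of the kind you attempt is required.
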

\begin{remark}[Fixed points nowhere dense]\label{REMFPNWDense}
This prompts us to note that the set of fixed points of a partially hyperbolic dynamical system is nowhere dense: the set of fixed points of a continuous dynamical system is closed, and the restriction to it is the identity. The interior being nonempty is incompatible with partial hyperbolicity. Thus the $c$-admissible disk family in Proposition \ref{p:stablesections} can be chosen away from the set of fixed points. 
\end{remark}

The next step is a result about stable accessibility on center disks. Its proof needs adaptations for flows, the core part of which is Lemma \ref{l.perturbflow} below.
\begin{proposition}[{\cite[Proposition 1.3]{AvilaCrovisierWilkinson}}]\label{p:stableaccessibility}
If \(\Lambda\) is a partially hyperbolic set for a flow \(\Phi\) and $\delta>0$, then (with the notations of Remark \ref{REMPersistence}) there exist $T>0$ and a neighborhood $U$ of \(\Lambda\) such that $\overline{U}\subset U_0(\Phi, \Lambda)$ and if $\mathcal{D}$ is a $c$-admissible disk family with respect to $(\Phi, \Lambda)$ with $r(\mathcal{D})< T^{-1}$ and $R(\mathcal{D})>T$, then for all $\sigma>0$ there exists $\Psi\in \mathcal{U}_0(\Phi, \Lambda)$ such that:
\begin{enumerate} \setlength{\itemsep}{0pt}
\item $d_1(\Phi, \Psi)<\delta$,
\item $d_0(\Phi, \Psi)<\sigma$, and
\item if $D\in \mathcal{D}$ and $\Delta\subset U$ is a bisaturated partially hyperbolic set for \(\Psi\), then $(\Psi, \Delta)$ is stably accessible on $D$,
\item if \(\Phi\) preserves a volume, symplectic, or contact form, then so does \(\Psi\).
\end{enumerate}
\end{proposition}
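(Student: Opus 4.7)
\medskip

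\noindent\textbf{Proof plan.} The strategy is to transcribe the Avila--Crovisier--Wilkinson proof of their Proposition 1.3 on the time-1 map $\varphi^1$, but to realize every local perturbation of $\varphi^1$ as the time-1 map of a flow perturbation $X\rightsquigarrow X+Y$ whose generating vector field $Y$ is compactly supported in a flow-box chart. Choose $T$ large enough that the standard cone-field and local-manifold estimates hold uniformly in a neighborhood $U$ with $\overline U\subset U_0(\Phi,\Lambda)$, that perturbations of $X$ of size $\delta$ remain in $\mathcal U_0(\Phi,\Lambda)$, and that the ACW quantitative inputs are satisfied. By Remark \ref{REMFPNWDense} we can arrange that $|\mathcal D|$ is disjoint from the fixed-point set.

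\medskip

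The new input supplied by the flow setting is the conjunction $r(\mathcal D)<T^{-1}$, $R(\mathcal D)>T$: it guarantees that each $D=V_\eta(p)\in\mathcal D$ sits inside a flow box $F_D\cong D\times(-T/2,T/2)$ whose $\Phi$-iterates of length $\le T$ are pairwise disjoint, and that the boxes for distinct disks are themselves disjoint. Combining the adapted chart of Proposition \ref{PRPAdapted} around $p$ with the Darboux-type coordinates of Remark \ref{REMDarboux}, I would choose coordinates on $F_D$ in which the preserved form is standard \emph{and} in which $X=\partial/\partial t$ (automatic in the contact case, attainable in the others by a structure-compatible flow-box straightening). On this model I would produce $Y_D=\chi Z_D$, with $\chi$ a bump supported in $F_D$, equal to $1$ near $D\times\{0\}$ and with controlled $C^1$-norm, and $Z_D$ drawn from the appropriate class of fields: unconstrained in the general case, divergence-free in the volume case, the Hamiltonian field of a small quadratic Hamiltonian in the symplectic case, and the contact field of a suitable contact Hamiltonian in the contact case. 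Setting $Y=\sum_{D\in\mathcal D}Y_D$ and letting $\Psi$ be the flow of $X+Y$, the disjointness of boxes and of their $\Phi$-iterates of length $\le T$ localizes the correction $\psi^1-\varphi^1$ near each $D$ and controls its $C^1$-size by $\|Y\|_{C^1}$; shrinking $Y$ then yields (1), (2), (4), and the effect on $\psi^1$ near $D$ is of exactly the form used in the ACW argument to parametrize accessibility-generating ``center translations.'' Their algebraic step producing $su$-loops with nontrivial holonomy then gives (3) on each $D\in\mathcal D$.

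\medskip

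The most delicate piece, which I would isolate as the core technical lemma (Lemma \ref{l.perturbflow} in the paper), is the construction of the parametrized family $\{Z_D\}$ in the \emph{contact} case: contact vector fields have one fewer degree of freedom than arbitrary ones, so a priori it is unclear that all translations needed by ACW can be realized. The point is that in the Darboux model of Remark \ref{REMDarboux} the Reeb field is $\partial/\partial t$, i.e., the flow direction; hence the $c$-admissible disk $D$ is transverse to the Reeb field, the slice $D\times\{0\}$ inherits a symplectic structure from $d\alpha$, and contact Hamiltonians act on it with the full range one would have in the symplectic case. This reduces the contact construction to a contact-Hamiltonian analogue of the symplectic construction and lets the ACW parametrized perturbation be realized as a contact flow. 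The volume and symplectic cases are easier and proceed along the same lines; the non-structure-preserving case is a direct translation of ACW.
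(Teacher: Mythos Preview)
Your plan and the paper's both channel the ACW argument, but the local perturbation mechanism is genuinely different. You propose to \emph{suspend} the ACW diffeomorphism perturbation: take a flow box $F_D\cong D\times(-T/2,T/2)$ around each disk and manufacture $Y_D$ there so that $\psi^1$ matches the ACW-perturbed time-1 map near $D$. The paper never routes through the time-1 map; it perturbs $X$ \emph{spatially} in small balls. Concretely, Lemma \ref{l.smallerballs} first selects auxiliary points $z_1,\dots,z_\ell$ near each $D$, and Lemma \ref{l.perturbflow} replaces $X$ by $X+\alpha\eta v$ (a fixed center vector, in the adapted chart) on balls $B(z_i\pm10jd\epsilon\,v_s,2d\epsilon)$---the corners of Brin quadrilaterals---bumping back to $X$ across a thin annulus. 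This tilt of the flow lines gives each quadrilateral a controlled center displacement ($\theta$-accessibility); Lemma \ref{l.centeraccessibility} upgrades that to accessibility on $f_p(B(z_i,\epsilon))$, and Lemma \ref{l.smallerballs} propagates it to all of $D$. Your route bypasses the $\theta$-accessibility scaffolding, but in exchange you must verify that the flow-box suspension really reproduces the ACW holonomy effect on the $su$-foliations of the \emph{flow} (not merely on $\psi^1$), and you carry the long-tube bookkeeping that the paper avoids entirely. For the structure-preserving cases the mechanisms also differ: the paper glues its ball-supported perturbation by a pasting lemma (volume), by interpolating Hamiltonians in short Darboux flow-box charts (symplectic), and---in the contact case---by placing a rotated/scaled Darboux chart inside the original and interpolating the contact \emph{forms} across the annulus, rather than by contact Hamiltonians on the $d\alpha$-slice as you suggest. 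Both lines are viable; the paper's is more direct and needs no flow box of height comparable to $T$.
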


Theorem \ref{t.bisaturated} follows from Propositions \ref{p:stablesections} and \ref{p:stableaccessibility} just as \cite[Theorem B]{AvilaCrovisierWilkinson} follows from \cite[Propositions 1.3 and 1.4]{AvilaCrovisierWilkinson} in \cite[Section 1.6]{AvilaCrovisierWilkinson}, including the preservation of a volume, symplectic, or contact form.

\begin{proof}[Proof of Proposition \ref{p:stableaccessibility}]
Much of the proof of this proposition is exactly as in \cite{AvilaCrovisierWilkinson}. We will explain the ideas in these parts while highlighting the points that need modifications for flows.

The first step \cite[Lemma 2.1]{AvilaCrovisierWilkinson} introduces smaller disks that are sufficiently close, but disjoint from, a $c$-admissible disk and have the property that there are $su$-paths at sufficiently small scales connecting any point in the $c$-admissible disk to one of the smaller disks, not just for the original map, but also for any maps that are sufficiently $C^1$ close.  

For these smaller disks one can then perform perturbations so that the flow is accessible on them.  Then one can show the perturbed flow will be accessible on $D$ (the $c$-admissible disk for the original flow).  By the choice of the $c$-admissible family we obtain accessibility of the bisaturated set for the perturbed flow.

The existence of the smaller disks does not use perturbations and assumes the existence of a $c$-admissible disk for a partially hyperbolic map.  This holds in our setting by considering time-\(t\) maps.  We include the statement of the result for completeness and adapt it for flows.

\begin{lemma}\label{l.smallerballs}
There exist $\delta_1, \rho_1>0$, $K>1$ and a neighborhood $U_1$ of \(\Lambda\) such that for any $\rho\in(0, \rho_1)$, any $c$-admissible disk $D$ with radius $\rho$, centered at $p\in \Lambda$, and for any $\epsilon\in (0, K^{-1}\rho)$, there exist $z_1,..., z_\ell\in T_pM$ such that:
\begin{enumerate}
\item[(1)] The balls $B(z_i, 100d^2\epsilon)$ are in the $K\epsilon$-neighborhood of $f_p^{-1}(D)$.
\item[(2)] The balls $B(z_i, 100d^2\epsilon)$ are pairwise disjoint.
\item[(3)] For any $x\in D$, there exists some $z_i$ such that for any \(\Psi\) that is $\delta_1$-close to \(\Phi\) in the $C^1$ distance and for any bisaturated set $\Delta\subset U_1$ for \(\Psi\):
\begin{enumerate}
\item[(a)] if $x\in \Delta$, then there is a $su$-path for \(\Psi\) between $x$ and $f_p(B(z_i, \epsilon))$,
\item[(b)] if $f_p(B(z_i, \epsilon))\subset \Delta$, then any point $y\in f_p(B(x, \epsilon/2))$ belongs to an $su$-path that intersects $f_p(B(z_i, \epsilon))$.
\end{enumerate}
\end{enumerate}
\end{lemma}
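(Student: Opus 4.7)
The plan is to reduce directly to the diffeomorphism case, following the authors' hint that the result ``holds in our setting by considering time-$t$ maps.'' Concretely, I would apply \cite[Lemma 2.1]{AvilaCrovisierWilkinson} to the time-1 map $\Phi^1$ of the flow and then transport the conclusion back to $\Phi$ and to nearby flows $\Psi$. The reduction rests on the standard fact that for any partially hyperbolic flow the strong stable and unstable bundles, and hence the global stable and unstable manifolds, coincide pointwise with those of its time-1 map. Consequently the notion of $c$-admissible disk, the notion of $su$-path, the accessibility classes, and the property of being bisaturated all transfer without modification between the flow and its time-1 map.

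The steps, in order, would be the following. First, I would invoke \cite[Lemma 2.1]{AvilaCrovisierWilkinson} for $\Phi^1$ to produce constants $\delta_1^\ast, \rho_1, K$, a neighborhood $U_1$ of $\Lambda$, and, for each $c$-admissible disk $D$ of radius $\rho \in (0, \rho_1)$ centered at $p \in \Lambda$ and each $\epsilon \in (0, K^{-1}\rho)$, tangent points $z_1, \dots, z_\ell \in T_p M$ satisfying (1), (2), and the diffeomorphism analogue of (3). Second, I would choose $\delta_1 > 0$ so that $d_1(\Phi,\Psi) < \delta_1$ forces $d_1(\Phi^1, \Psi^1) < \delta_1^\ast$; this is available because the time-1 map depends continuously (in fact locally Lipschitzly) on its generating vector field in the $C^1$ topology, by Gronwall-type estimates. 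Third, I would note that a bisaturated $\Psi$-invariant set $\Delta \subset U_1$ is automatically bisaturated for the diffeomorphism $\Psi^1$, again because the strong invariant manifolds of $\Psi$ and $\Psi^1$ agree.

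From there, properties (1) and (2) transfer for free, as they are purely geometric statements about the adapted chart $f_p$ and make no reference to the dynamics. For (3a) and (3b), the $su$-paths supplied by the diffeomorphism version are concatenations of arcs in stable and unstable leaves of $\Psi^1$, and these leaves coincide with those of $\Psi$, so the very same concatenations qualify as $su$-paths for the flow $\Psi$ in the sense of Definition \ref{DEFAccessibility}. The only minor bookkeeping point is to ensure that $U_1$ lies inside the persistence neighborhood $U_0(\Phi, \Lambda)$ of Remark \ref{REMPersistence}, which is arranged by shrinking $U_1$ if necessary.

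I do not anticipate a genuine obstacle at this step: the entire content of the lemma is already packaged inside \cite[Lemma 2.1]{AvilaCrovisierWilkinson}, and the passage to flows is formal once the coincidence of the strong invariant manifolds and the $C^1$-continuity of $\Phi \mapsto \Phi^1$ are recorded. This matches the authors' own remark that the statement is included for completeness, and it confirms the expectation that the real work in proving Proposition \ref{p:stableaccessibility} will lie not here but in the perturbation step carried out by the forthcoming Lemma \ref{l.perturbflow}.
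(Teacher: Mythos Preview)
Your proposal is correct and matches the paper's own treatment: the authors do not give a separate proof of this lemma, noting only that it ``holds in our setting by considering time-\(t\) maps'' and that the statement is included for completeness. Your reduction to \cite[Lemma 2.1]{AvilaCrovisierWilkinson} via the time-1 map, together with the observation that strong invariant manifolds (hence $su$-paths and bisaturation) coincide for a flow and its time-1 map and that $\Phi\mapsto\Phi^1$ is $C^1$-continuous, is exactly the intended argument.
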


The idea of the proof of Theorem \ref{THMOpenDenseAccessibility} is to create small perturbations of the flow $\Phi$ supported near the points $z_1,..., z_\ell$ to create accessibility near each $z_i$.  This requires the following notion (which will be used in Lemma \ref{l.centeraccessibility} to show accessibility near the $z_i$).
\begin{definition}[$\theta$-accessibility]
A pair $(\Psi, \Delta)$ of a flow and a bisaturated set is \emph{$\theta$-accessible} on $f_p(B(z, 2d\epsilon)$ if there exist an orthonormal basis $w_1, ..., w_c$ of $E^c_p$ and for each $j\in \{1,..., c\}$ a continuous map
$$
H^j:[-1,1]\times[0,1]\times f_p^{-1}(\Delta)\cap B(z, 2d\epsilon)\to f_p^{-1}(\Delta)\cap B(0, 2\rho)$$
such that for any $x\in f_p^{-1}(\Delta)\cap B(z, 2d\epsilon)$ and $s\in [-1,1]$ we have
\begin{enumerate}
\item[(a)] $H^j(s,0,x)=x$,
\item[(b)] the map $f_p\circ H^j(s,., x):[0,1]\to \Delta$ is a 4-legged $su$-path (Brin quadrilateral), i.e., the concatenation of 4 curves, each contained in a stable or unstable leaf in alternation,
\item[(c)] $\|H^j(s,1,x)-x\| <\frac{\epsilon}{10d}$, and
\item[(d)] $\|H^j(\pm 1, 1 x)-(x \pm \theta \epsilon w j)\|<\theta \frac{\epsilon}{10d}$.
\end{enumerate}
\end{definition}
The second step is that $\theta$-accessibility in a neighborhood of a point implies accessibility on a smaller neighborhood.  This is a restatement of \cite[Lemma 2.2]{AvilaCrovisierWilkinson}.  The proof is again almost the same, but we provide it for completeness.

From now on write $d:=u + c + s$, where $\dim E^u=u$, $\dim E^c = c$, and $\dim E^s=s$.
\begin{lemma}\label{l.centeraccessibility}
For any $\theta>0$, there exist $\delta_2, \rho_2>0$ and a neighborhood $U_2$ of \(\Lambda\) such that 
\begin{enumerate}
\item [(1)] for any $p\in \Lambda$, any $z\in B(p, \rho_2)\subset T_pM$ and $\epsilon\in(0, \rho_2)$,
\item[(2)] for any flow \(\Psi\) that is $\delta_2$-close to \(\Phi\) in the $C^1$ topology,
\item[(3)] for any bi-saturated set $\Delta\subset U_2$ such that $(\Psi, \Delta)$ is $\theta$-accessible on $f_p(B(z, 2d\epsilon)$,
\end{enumerate}
the pair $(\Psi, \Delta)$ is accessible on $f_p(B(z,\epsilon))$.
\end{lemma}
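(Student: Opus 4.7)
The plan follows the Avila--Crovisier--Wilkinson proof of \cite[Lemma 2.2]{AvilaCrovisierWilkinson}: package the $\theta$-accessibility data into a continuous endpoint map from a parameter space of $su$-path blueprints into $M$, then use a topological-degree or fixed-point argument to show the image covers $f_p(B(z,\epsilon))$.

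Fix $x \in \Delta \cap f_p(B(z,\epsilon))$. I parameterize $su$-paths emanating from $x$ by $(s,u_0,w_0) \in [-1,1]^c \times (E^u_p \cap B(0,\epsilon)) \times (E^s_p \cap B(0,\epsilon))$. Since $\Delta$ is bisaturated, the local unstable and stable leaves of $\Psi$ through $x$ (and through each intermediate endpoint built below) lie entirely in $\Delta$; once $U_2, \rho_2, \delta_2$ are chosen small enough---using Remark \ref{REMPersistence} for persistence of splittings and standard continuity of stable and unstable foliations---these leaves in the chart $f_p$ are nearly linear graphs over $E^u_p$ and $E^s_p$. I build an $su$-path by an unstable leg realizing displacement approximately $u_0$, then a stable leg realizing $w_0$, then the concatenation of the $c$ Brin quadrilaterals $H^j(s_j,1,\cdot)$ from the $\theta$-accessibility data. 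Let $G_x(s,u_0,w_0) \in \Delta$ denote its endpoint. Conditions (c)--(d) of $\theta$-accessibility and the near-linearity of stable and unstable leaves yield, in the chart, an expansion
\[
f_p^{-1}\bigl(G_x(s,u_0,w_0)\bigr) = f_p^{-1}(x) + u_0 + w_0 + \theta\epsilon \sum_{j=1}^c s_j w_j + \eta(s,u_0,w_0),
\]
with the error term $\eta$ controlled by the transverse bounds in (c), (d) and by leaf-curvature bounds.

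The linear part $L(s,u_0,w_0) := u_0 + w_0 + \theta\epsilon\sum_j s_j w_j$ is an isomorphism from the parameter space onto $T_pM$ (after rescaling the $s_j$-factor by $\theta\epsilon$). Since $\eta$ can be made arbitrarily small relative to $\theta\epsilon$ by tightening $U_2, \rho_2, \delta_2$, a contraction-mapping argument produces, for any $y \in f_p(B(z,\epsilon)) \cap \Delta$, parameters $(s,u_0,w_0)$ with $G_x(s,u_0,w_0) = y$. The resulting concatenated $su$-path from $x$ to $y$ witnesses the required accessibility on $f_p(B(z,\epsilon))$.

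The main obstacle I anticipate is scale calibration: a single Brin quadrilateral yields only $\theta\epsilon$ motion in its center direction, so for small $\theta$ (as is forced by the compatibility of (c) and (d)) one application per direction does not reach all center displacements of order $\epsilon$. The fix, standard in \cite{AvilaCrovisierWilkinson}, is to iterate each $H^j$ roughly $1/\theta$ times, restarting at each intermediate endpoint; this is permitted because $\theta$-accessibility is assumed on the larger ball $f_p(B(z,2d\epsilon))$, which comfortably contains every intermediate endpoint. Verifying uniform control of the accumulated error $\eta$ across such iterations, together with the contraction condition uniform in the basepoint $x$, is the technical crux.
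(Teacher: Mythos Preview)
Your proposal is correct and follows essentially the same route as the paper: combine stable and unstable holonomy legs with iterated Brin quadrilaterals $H^j$ to build a continuous endpoint map from a $d$-dimensional parameter cube, show it is $C^0$-close to an affine isomorphism, and conclude surjectivity onto $f_p(B(z,\epsilon))$. The paper packages this by defining local ``flows'' $\varphi_i^t$ in each coordinate direction (with your anticipated $1/\theta$-fold iteration appearing as the inductive extension of $\varphi_{u+j}^t$ to $|t|>1$) and then uses a degree-type argument rather than contraction mapping---the latter is not directly available since the $H^j$ are only continuous, but you already flagged degree as an alternative in your opening line.
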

\begin{proof}
We let $v_1,..., v_u$ be an orthonormal basis of $E^u_p$ and $v_{u+c+1},..., v_d$ an orthonormal basis for $E^s_p$.  We define local flows $\Phi_i$ on $f_p^{-1}(\Delta)$ as follows: let $X_i$ be a vector field along the leaves of $f_p^{-1}(W^j_\Psi)$  where $X_i(x)=D\pi^j_x(x+v_i)$ for $j=u$ if $1\leq i\leq u$ and $j=s$ if $u+c+1\leq j\leq d$, and the local flow is defined by the vector field $X_i$ on the set $B(0, 2\rho_1)\cap f_p^{-1}(\Delta)$ and $\rho_1$ is given by Lemma \ref{l.smallerballs}.  So the orbit of $x$ is the projection by $\pi^j_x$ on the curve $t\mapsto x + tv_i$ for $|t|<\rho_1$, and the orbits are $C^1$ curves whose tangent space is arbitrarily close to $\mathbb{R}v_i$ for sufficiently small constants $\rho_1$, $\delta_1$, and $U_1$ as in Lemma \ref{l.smallerballs}.

For $\rho_2$, $\delta_2$, and $U_2$ sufficiently small we see that 
\begin{equation}\label{e.close}
\|\varphi_i^t(x)-(x + t\theta \epsilon v_i)\|< |t|\theta \frac{\epsilon}{10d}.
\end{equation}
We also let $v_{u+j}=w_j$ be an orthonormal basis for the center direction and define inductively 
$$
\begin{array}{llll}
\varphi_{u+j}^t(x)=H^j(t, 1, x)\textrm{ when }t\in [0,1),\\
\varphi_{u+j}^t(x)=\varphi_{u+j}^{t-1}\circ \varphi_{u+j}^1(x)\textrm{ when }t>1,\\
\varphi_{u+j}^t(x)=\varphi_{u+j}^{t+1}\circ \varphi_{u+j}^{-1}(x)\textrm{ when }t<0
\end{array}
$$
where the above holds for $t$ so long as it can be defined.  From properties (c) and (d) in the definition of $H^j$ and estimate \ref{e.close} above we let $P(t_1,..., t_d)=\varphi_1^{t_1}... \varphi_d^{t_d}(x_0)$ for $(t_1,..., t_d)\in [-3\theta^{-1}, 3\theta^{-1}]^d$.  This is a continuous map and $\|P(t_1, ..., t_d)- (x_0 + \sum_i t_i \theta\epsilon v_i)\|<\frac{2\epsilon}{10}$.  The image of $P$ contains $B(x_0, \frac{5\epsilon}{2})$, and $f_p\circ P$ shows that $(\Psi, \Delta)$ is accessible on $B(z, \epsilon)$.
\end{proof}
\subsection*{The adaptation to flows}
We next produce the perturbations near the \(z_i\), from Lemma \ref{l.smallerballs}, that we need to establish accessibility.  This result and proof are similar to \cite[Lemma 2.3]{AvilaCrovisierWilkinson}, but in our case our perturbations need to be constructed for flows instead of maps. This is the essential adaptation of the Avila--Crovisier--Dolgopyat--Wilkinson arguments.
\begin{lemma}\label{l.perturbflow}
Consider a partially hyperbolic \emph{flow} \(\Phi\) generated by a vector field \(X\). With the previous notations,
there exist $\eta, \alpha_0>0$ such that for any $\alpha\in(0, \alpha_0)$, $p\in \Lambda$, $z\in B(0, 1/4)\subset T_pM$ with $X(f_p(z))\neq0$, $r\in (0, 1/4)$ and any unit vector $v\in E^c_p$ there is a vector field \(Y\) such that:
\begin{enumerate}
\item $Y=X$ outside $f_p(B(z,3r))$,
\item $df_p^{-1}Y=df_p^{-1}X+\alpha \eta  v$ on $B(z, 2r)$,
\item \(Y\) is $\alpha$-close to \(X\),
\item the flow \(\Psi\) defined by  \(Y\) is $\frac{r}{100d^2}$-close to \(\Phi\)
  in the $C^1$ distance.
\item if \(\Phi\) preserves a volume, symplectic, or contact form, then so does \(\Psi\).
\end{enumerate}
\end{lemma}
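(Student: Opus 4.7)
My plan is to construct $Y = X + (f_p)_*\xi$ by working in the adapted chart $f_p$ of Proposition \ref{PRPAdapted}, where the preserved form is in Darboux normal form on $B(0,1) \subset T_pM$. Fix a standard bump function $\chi\colon\R^d\to[0,1]$ with $\chi \equiv 1$ on $B(z,2r)$, $\operatorname{supp}(\chi) \subset B(z,3r)$, and $\|D^k\chi\|_{C^0} = O(r^{-k})$. I will construct $\xi$ on $B(0,1)$ supported in $B(z,3r)$, equal to $\alpha\eta v$ on $B(z,2r)$, respecting the relevant structure, with $\|\xi\|_{C^0} = O(\alpha\eta)$ and $\|\xi\|_{C^1} = O(\alpha\eta/r)$.

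The four constructions of $\xi/(\alpha\eta)$ are: \emph{(general)} simply $\chi v$; \emph{(volume)} decompose $v = \sum_jv_je_j$ in an orthonormal basis, and for each $j$ pick $k\neq j$ and use $v_j[\partial_k(\chi(x_k-z_k))e_j - \partial_j(\chi(x_k-z_k))e_k]$, which is a divergence-free Hamiltonian rotation in the $(e_j,e_k)$-plane with two-dimensional Hamiltonian $\chi(x_k-z_k)$ and equals $v_je_j$ on $B(z,2r)$; \emph{(symplectic)} the Hamiltonian vector field of $\chi\psi$, where $\psi(x,y) = \sum_i a_i(y_i - z^y_i) - \sum_i b_i(x_i - z^x_i)$ is the unique linear Hamiltonian whose Hamiltonian vector field under $\omega = \sum dx_i\wedge dy_i$ is $v = \sum_ia_i\partial_{x_i}+\sum_ib_i\partial_{y_i}$; \emph{(contact)} since $E^c_p = \R X$ for contact partially hyperbolic flows and $X = \partial_t$ is the Reeb field of $\alpha = dt + \sum x_idy_i$ by Remark \ref{REMDarboux}, one has $v = \pm\partial_t$, and we take $\xi$ to be the contact vector field associated with the contact Hamiltonian $\chi$.

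Properties (1), (2), and (5) are built into each construction. Property (3) follows from $\|\xi\|_{C^0} = O(\alpha\eta)$, which in the volume and symplectic cases uses the centering factor ($x_k - z_k$, respectively $\psi$) of size $O(r)$ to cancel the $O(r^{-1})$ growth of $\nabla\chi$. For (4), since $X(f_p(z))\neq 0$, after shrinking $\alpha_0$ we may assume $X$ is bounded below on $f_p(B(z,3r))$, so every orbit spends time $O(r)$ in the support of $\xi$. The standard Gronwall estimate for $D\Psi^t - D\Phi^t$ with source $D\xi = O(\alpha\eta/r)$ over time $O(r)$ then gives $\|\Psi-\Phi\|_{C^1} = O(\alpha\eta)$, which is at most $r/(100d^2)$ after further shrinking $\alpha_0$ and $\eta$ to achieve the joint scaling $\alpha\eta \le O(r)$ (the flow analog of the diffeomorphism estimate in \cite[Section 2]{AvilaCrovisierWilkinson}).

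I expect the main technical obstacle to be the contact case. The standard formula
\[
Y_H = \bigl(H - \textstyle\sum x_i\partial_{x_i}H\bigr)\partial_t + \sum\bigl(x_i\partial_t H - \partial_{y_i}H\bigr)\partial_{x_i} + \sum\partial_{x_i}H\,\partial_{y_i}
\]
applied to $H = \alpha\eta\chi$ recovers $Y_H = \alpha\eta\partial_t$ on $B(z,2r)$ as desired, but naively produces off-Reeb coefficients of order $\alpha\eta/r$ in the transition region; moreover, unless $|z|$ is kept small in the $x$-direction the $\partial_t$-coefficient is inflated by the factor $|z|$. I remedy this by first precomposing $f_p$ with a contactomorphism of the form $(t,x,y)\mapsto(t + \langle x_0,y\rangle,\,x - x_0,\,y)$ (and its $y$- and $t$-analogs), each of which preserves $\alpha = dt + \sum x_idy_i$ and recenters $z$ at the origin. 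After this adjustment $|x_i| = O(r)$ throughout $\operatorname{supp}(\chi)$, and the joint scaling $\alpha\eta \le O(r)$ used in (3)--(4) keeps all coefficients of $Y_H$ within the required bounds; the remaining three categories pass through the same estimates without this recentering.
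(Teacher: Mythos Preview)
Your constructions in the general, volume-preserving, and symplectic categories are correct and in fact more explicit than the paper's: the paper invokes the pasting lemma of Teixeira/Arbieto--Matheus for the volume case and sketches a Hamiltonian interpolation in flow-box charts for the symplectic case, whereas you write down concrete divergence-free and Hamiltonian bump formulas. These are legitimate and arguably cleaner alternatives that buy self-containedness at no cost.

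The contact case, however, has two genuine gaps. First, the assertion \(E^c_p = \R X\) for contact partially hyperbolic flows is unjustified and false in general: Reeb flows can be partially hyperbolic with \(\dim E^c > 1\) (e.g.\ geodesic flows on higher-rank locally symmetric spaces of nonpositive curvature), and the lemma is applied later in the paper with \(v\) ranging over a full orthonormal basis \(w_1,\dots,w_c\) of \(E^c_p\). Second, and more fundamentally, even granting \(v=\pm\partial_t\), your \(Y = X + Y_{\alpha\eta\chi} = Y_{1+\alpha\eta\chi}\) is a \emph{contact} vector field (its flow preserves \(\ker\alpha\), since \(L_{Y_H}\alpha = R(H)\,\alpha\)) but not a \emph{Reeb} vector field. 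A short computation shows that \(Y_H\) is Reeb for a form with kernel \(\ker\alpha\) only when \(H\) is constant, and you give no argument that it is Reeb for a form with a different kernel that still agrees with \(\alpha\) outside \(f_p(B(z,3r))\). Since ``contact flow'' in this paper means Reeb flow, property~(5) is not established. Your recentering contactomorphism controls the size of the off-Reeb coefficients of \(Y_H\) but does not address this structural issue.

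The paper avoids both problems by reversing the order of construction: it first places a rotated-and-scaled Darboux form on \(B(z,2r)\) whose Reeb field is exactly \(\partial_t+\alpha\eta v\), then interpolates contact \emph{forms} (not vector fields) across the annulus---the interpolant remains contact because the contact condition is \(C^1\)-open and the rotation is small---and finally \emph{defines} \(Y\) as the Reeb field of the resulting form. This handles arbitrary \(v\in E^c_p\) and yields property~(5) tautologically, at the cost of verifying (1)--(4) from the form rather than the field.
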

\begin{proof}
Figure \ref{FIGperturbation} (which utilizes that by Remark \ref{REMFPNWDense} we are working away from fixed points) illustrates what we would like to achieve: to connect the perturbed vector field (in the smallest circle) to the original one (in the square) by a bump-function interpolation.

For arbitrary vector fields this is all there is.
\begin{figure}[ht]
\scalebox{2.5}{\tikzset{->-/.style={decoration={markings,mark=at position #1 with {\arrow{>}}},postaction={decorate}}}
\begin{tikzpicture}[>=stealth]
\foreach \x in {-1,-.9,...,1}
\draw[->-=.52,gray](\x,-1)--(\x,1);
\draw[fill,white](0,0) circle (.45);
\begin{scope}
\clip(0,0) circle (.7);
\draw[blue](0,0) circle (.7);
\draw[blue](0,0) circle (.45);
\draw[blue](0,0) circle (.7);
\begin{scope}[rotate around={-10:(0,0)}]
\foreach \x in {-.95,-.85,...,.95}
\draw[->-=.52,blue](\x,-1)--(\x,1);
\end{scope}
\end{scope}
\end{tikzpicture}}
\caption{The perturbations in Lemma \ref{l.perturbflow}}\label{FIGperturbation}
\end{figure}
For volume-preserving flows, we invoke the pasting lemma \cite[Theorem 1]{Teixeira} (see also \cite{ArbietoMatheus}) to ensure volume-preservation of the perturbation.

For symplectic flows take symplectic flow-box charts as in Proposition \ref{PRPAdapted} in the rectangle and large circle in Figure \ref{FIGperturbation}; they are locally Hamiltonian with constant Hamiltonians on each neighborhood, so we can interpolate the Hamiltonians in the annulus.

For contact flows take a Darboux chart (Remark \ref{REMDarboux}) on the rectangle, then put a suitably rotated and scaled version in the large disk. This defines a local contact form whose Reeb field is as desired; interpolate the contact forms in the annulus. (For perturbations in the flow direction, i.e., reparameterization, no rotation is needed.)
\end{proof}

To construct the desired flow we will use the above perturbation to establish $\theta$-accessibility for the sets $f_{p_i}(B(z_i, 2d\epsilon))$.  We do this by adjusting Brin quadrilaterals by using the perturbation above.  Before explaining this step we first adjust the neighborhood $U$ and describe the setup we will need.

We first let $C_0^s$ and $C_0^u$ be cone fields in $U_0$ that are $\Phi$ invariant for $t<0$ and $t>0$, respectively.  Furthermore, from the choice of $\delta_3$ we know that the cone fields are invariant for any flow that is $\delta_3$-close in the  $C^1$ topology to \(\Phi\).  For $T>0$ we let $U$ be a neighborhood of \(\Lambda\) such that 
\begin{equation}\label{e.definitionofU}
\overline{U}\subset U_1 \cap U_2 \cap \bigcap_{|t|\leq T}\varphi^t(U_0).
\end{equation}
We also define $C^u=D\varphi^T(C_0^u)$ and $C^s=D\varphi^T(C_0^s)$ on $U$.  We know there exists some $T_1>0$ and $\rho_4>0$ such that if $T\geq T_1$ and $\rho<\rho_4$, then for any $p\in \Lambda$ we have $f_p(B(0, 2\rho))\subset U$ and the cone fields $Df_p^{-1}(C^s)$ and $Df_p^{-1}(C^u)$ on $B(0, 2\rho)$ are $\gamma$-close to $E^s_p$ and $E^u_p$ in $T_pM$ where $\gamma$ is smaller than $\alpha\eta$.

Let $\rho\in (0, \min\{ \rho_1, \rho_2, \rho_3, \rho_4\})$ and fix $T\geq T_1$ so that any $c$-admissible disk $D$ with center $p\in \Lambda$ and $r(D)<T^{-1}$ satisfies $f_p(D)\subset B(0, \rho)\subset T_pM$.  We also have $U$ defined by $T$ satisfying (\ref{e.definitionofU}) and the existence of a family $\mathcal{D}$ of $c$-admissible disks from Proposition \ref{p:stablesections} for some $\sigma>0$.

Fix $\theta=2\alpha\eta d$.  We now explain the quadrilaterals we will use to establish $\theta$-accessibility for the perturbed flow.  For $D\in \mathcal{D}$ we fix the $z_i$ as in Lemma \ref{l.smallerballs} and the sets $f_{p_i}(B(z_i, 100d^2\epsilon))\subset U$ for $\epsilon$ sufficiently small.  We can also define  subspaces $\mathcal{E}^s$ and $\mathcal{E}^u$  such that 
\begin{itemize}
\item $Df_{p_i}(z_i)\mathcal{E}^s\subset C^s(f_{p_i}(z_i))$,
\item $Df_{p_i}(z_i)\mathcal{E}^u\subset C^u(f_{p_i}(z_i))$,
\item $\dim\mathcal{E}^s=\dim E^s_{p_i}$, and 
\item $\dim\mathcal{E}^u=\dim E^u_{p_i}$.
\end{itemize}

Let $v_s\in \mathcal{E}^s$ and $v_u\in \mathcal{E}^u$ be unit vectors and fix $w_1,..., w_c$ an orthonormal basis for $E^c_{p_i}$.   For the foliations $\mathcal{F}^u$ and $\mathcal{F}^s$  we define the flows $\Phi'_k$ that corresponds to the linear flow $(x,t)\mapsto x+ tv_k$ projected to the leaves of $\mathcal{F}^k$ for $k\in\{u,s\}$. 
For each $j\in \{1,..., c\}$ we examine the quadrilaterals given by the composition
$$
v_{i,j}=\varphi'_s(-10jd\epsilon) \circ \varphi'_u(-10d\epsilon)\circ \varphi'_s(10jd\epsilon)\circ \varphi'_u(10d\epsilon)
$$
and
$$
v_{i,-j}=\varphi'_s(10jd\epsilon) \circ \varphi'_u(10d\epsilon)\circ \varphi'_s(-10jd\epsilon)\circ \varphi'_u(-10d\epsilon).
$$
We define \[R(i, \epsilon):=\max_{1\leq j\leq c}\big(\max (\|v_{i,j}\|, \|v_{i,-j}\|)\big).\]
Because \(R(i, \epsilon)\in o(\epsilon)\) \cite[equation (8)]{DolgopyatWilkinson}, we have:
\begin{proposition}\label{p.quadrilateral}
For each $z_i$ there exists some $\epsilon_0>0$ such that for $\epsilon\in(0, \epsilon_0)$ we have \(R(i, \epsilon)<\theta\epsilon/10d.\)
\end{proposition}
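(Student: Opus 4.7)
The plan is to read $v_{i,j}$ and $v_{i,-j}$ as displacement vectors between the base point and the endpoint of a four-legged Brin quadrilateral whose side lengths are the fixed multiples $10d\epsilon$ and $10jd\epsilon$ of $\epsilon$. If the leaves of $\mathcal{F}^u$ and $\mathcal{F}^s$ were affine subspaces of $T_{p_i}M$ in the chart $f_{p_i}$, the four legs would form a closed parallelogram and the displacement would vanish exactly. The nonzero displacement measures only the deviation of the holonomies from linearity.

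To make this quantitative I would Taylor-expand $\varphi'_u(\pm 10d\epsilon)$ and $\varphi'_s(\pm 10jd\epsilon)$ at the base point. By the construction of the $\Phi'_k$ (linear flows projected onto leaves), the generating vector fields at $z_i$ lie in $\mathcal{E}^u$ and $\mathcal{E}^s$ respectively, and the leaves of $\mathcal{F}^u$, $\mathcal{F}^s$ are $C^1$. The first-order contributions from the four legs cancel, as in any commutator-type expansion $\psi^{-t}\varphi^{-t}\psi^{t}\varphi^{t}=\mathrm{id}+t^2[X,Y]+o(t^2)$, so the surviving deviation is bounded by the modulus of continuity of the tangent-field of the holonomies along a path of length $O(\epsilon)$. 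This is exactly the estimate $R(i,\epsilon)\in o(\epsilon)$ recorded in \cite[equation (8)]{DolgopyatWilkinson}, which we are allowed to quote directly.

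With $R(i,\epsilon)/\epsilon\to 0$ as $\epsilon\to 0$ in hand, and $\theta>0$ fixed, I simply choose $\epsilon_0>0$ small enough that $R(i,\epsilon)<\theta\epsilon/(10d)$ for every $\epsilon\in(0,\epsilon_0)$, completing the proof.

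The only mild subtlety is that the foliations $\mathcal{F}^u$, $\mathcal{F}^s$ are a priori merely $C^0$ with $C^1$ leaves, so one cannot expect $O(\epsilon^2)$; the weaker $o(\epsilon)$ bound is both what the Dolgopyat--Wilkinson computation gives and all that is required here. Since $\epsilon_0$ is allowed to depend on the particular $z_i$ under consideration, no uniformity across $i$ is needed at this stage.
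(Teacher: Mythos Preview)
Your proposal is correct and matches the paper's approach exactly: the paper's entire argument is the single sentence preceding the proposition, namely that \(R(i,\epsilon)\in o(\epsilon)\) by \cite[equation (8)]{DolgopyatWilkinson}, from which the conclusion is immediate since \(\theta\) is fixed. Your additional explanation of \emph{why} the quadrilateral displacement is \(o(\epsilon)\) (first-order cancellation, \(C^0\) foliations with \(C^1\) leaves) is helpful exposition but not required, and your observation that only \(o(\epsilon)\) rather than \(O(\epsilon^2)\) is available and needed is exactly right.
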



We let $\Psi$ be the flow generated by a vector field  whose restriction to $B(z_i + 10jd\epsilon v_s, 2d\epsilon)$  and $B(z_i - 10jd\epsilon v_s, 2d\epsilon)$ satisfy the conditions in
Lemma \ref{l.perturbflow}. 




Then 
starting at $z_i$ and using the quadrilaterals above we see that the the new flow coincides with  translation by $\theta\epsilon w_j$ from the original flow.  Indeed by construction first leg of the quadrilateral is left unperturbed by the new flow.  Similarly, the second leg is left unperturbed.  The third leg is the composition of $x\mapsto x-(10d\epsilon)v_u$ with the translation $\frac{\theta}{2} \epsilon w_j$.  The fourth leg similarly corresponds with the composition with the linear flow and translation by $ \frac{\theta}{2} \epsilon w_j$.  Then the quadrilateral on $B(z_i, 2d\epsilon)$ corresponds with translation by $\theta \epsilon w_j$, see Figure \ref{f.quadrilateral}. 
\begin{figure}[htb]
\begin{center}
\includegraphics[width=1.1\textwidth]{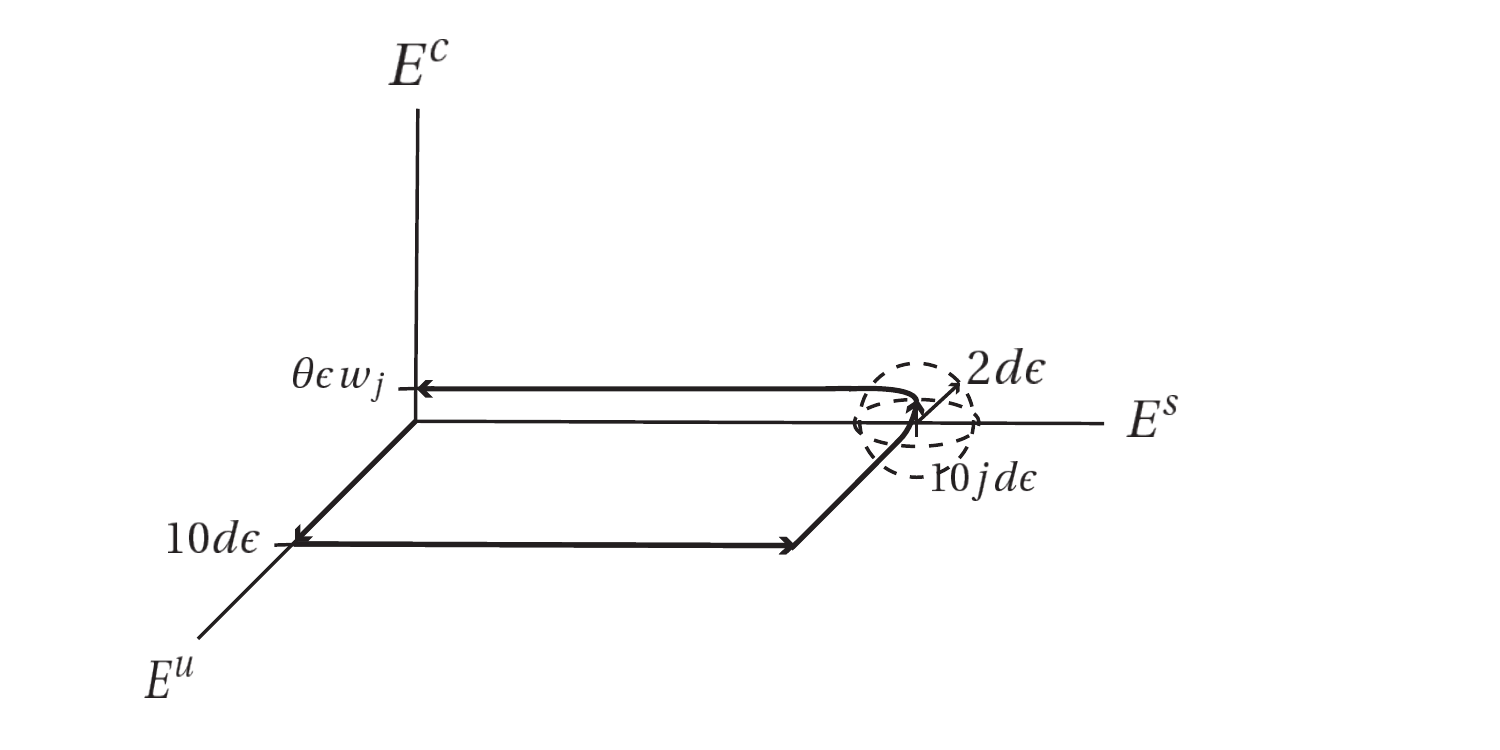}
\caption{Perturbed quadrilateral}\label{f.quadrilateral}
\end{center}
\end{figure}

Similarly, the quadrilateral associated with $-j$ corresponds with translation by $-\theta\epsilon w_j$ from the original flow.  From this we obtain the desired function $H^j$ used to ensure $\theta$-accessibility, see Figure \ref{f.thetaaccessible}.
\begin{figure}[htb]
\begin{center}
\includegraphics[width=.9\textwidth]{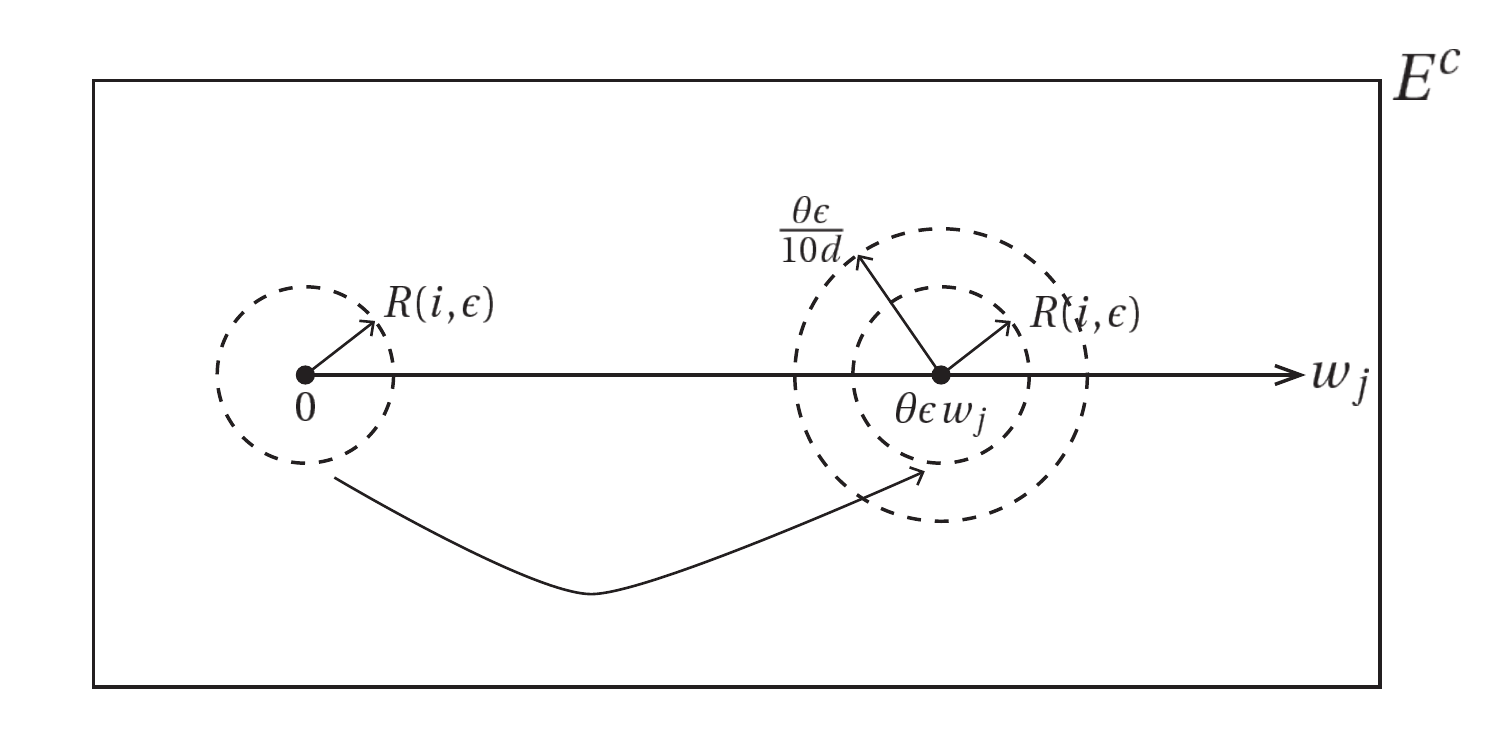}
\caption{$\theta$-accessibility}\label{f.thetaaccessible}
\end{center}
\end{figure}

Furthermore, from the size of the perturbations we see that the flows are at least $\frac{\epsilon}{100d}$-close in the $C^0$ topology.  This is the desired flow to prove Proposition \ref{p:stableaccessibility}.


Let $\Delta\subset U$ be a bisaturated set for \(\Psi\).  Let $\widetilde{\Psi}$ be $C^1$-close to \(\Psi\) and $\tilde{\Delta}\subset U$ be a  bisaturated set for $\widetilde{\Psi}$ contained in a small neighborhood of $\Delta$.  
By the construction above we know that $(\widetilde{\Psi}, \widetilde{\Delta})$ is $\theta$-accessible on each of the $f_{p_i}(B(z_i, 2d\epsilon))$.  Then $(\widetilde{\Psi}, \widetilde{\Delta})$ is accessible on each of the sets $f_{p_i}(z_i, \epsilon)$.

For $D\in \mathcal{D}$ that intersects $\widetilde{\Delta}$ at a point $z$ we know there exists a $su$-path for $\widetilde{\Psi}$ from $z$ to a point $y\in f_{p_i}(B(z_i, \epsilon))$ for some $i$ from Lemma \ref{l.smallerballs}.  
Furthermore, from Lemma \ref{l.smallerballs} we know if $x$ is $\epsilon/2$ close to $z$ in $D$, then there is a $su$-path to a point $y'\in f_{p_i}(B(z_i, \epsilon))$.  Accessibility on $B(z_i, \epsilon)$ implies there is a $su$-path from $y$ to $y'$.  So any point in the $\epsilon/2$-neighborhood of $z$ contained in $D$ is in the same accessibility class as $z$.  This implies that every point in $D$ is in the same accessibility class for $(\widetilde{\Psi}, \widetilde{\Delta})$ since $D$ is connected.  Then $(\Psi, \Delta)$ is stably accessible on any disk $D\in\mathcal{D}$. 
\end{proof}
%
%
%
\section{Centralizers}
With Theorem \ref{THMOpenDenseAccessibility} in hand, we now adapt arguments by Burslem from the proof of Theorem \ref{THMBurslem} \cite[Lemmas 5.2, 5.3]{Burslem} in order to prove our Theorem \ref{THMTrivFlowCentralizer}. We will use the following criterion, which holds for flows such that \(t\mapsto\varphi^t\) is invertible near \(t=0\) with its inverse continuous at the identity \cite[\S1.8]{FH}, but our statement instead invokes partial hyperbolicity for simplicity.
\begin{proposition}\label{PRPCentralizerCriterion}
A partially hyperbolic flow \(\Phi\) has trivial flow-centralizer (Definition \ref{DEFTrivialCentralizer}) if \(\Phi\) has discrete diffeomorphism-centralizer, i.e., if for any \(f\colon M\to M\), which is \(C^1\), commutes with \(\Phi\) and is sufficiently close to the identity, there is a \(\tau\) near 0 such that \(f=\varphi^\tau\).
\end{proposition}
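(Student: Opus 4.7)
The plan is to construct a smooth function $\tau(t)$ with $\tau(0)=0$ such that $\psi^t=\varphi^{\tau(t)}$ for all small $t$, where $(\psi^t)$ is an arbitrary $C^r$ flow in the flow-centralizer of $\Phi$. Once this is in hand, the flow group law forces $\tau$ to be additive, hence linear, so that $\tau(t)=ct$ and the generator $Y$ of $\Psi$ satisfies $Y=cX$, as required by triviality of the flow-centralizer.

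Let $\Psi=(\psi^t)$ be a $C^r$ flow commuting with $\Phi$. For each fixed $t$ the diffeomorphism $\psi^t$ commutes with every $\varphi^s$, so it lies in the diffeomorphism-centralizer of $\Phi$. Since $\psi^t\to\mathrm{id}$ in $C^1$ as $t\to0$, the hypothesis provides, for all sufficiently small $|t|$, some $\tau$ near $0$ with $\psi^t=\varphi^\tau$. To pin down this $\tau$ as a smooth function of $t$, pick a non-fixed point $x_0$ of $\Phi$, which exists because by Remark \ref{REMFPNWDense} the fixed-point set is nowhere dense. Since $X(x_0)\ne 0$, the map $\tau\mapsto\varphi^\tau(x_0)$ is a local $C^r$ embedding near $\tau=0$. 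The implicit function theorem then produces a unique $C^r$ function $\tau(t)$ with $\tau(0)=0$ solving $\varphi^{\tau(t)}(x_0)=\psi^t(x_0)$ for all small $t$. Combined with the existence branch given by the hypothesis, this must be the same $\tau$, so $\psi^t=\varphi^{\tau(t)}$ globally on $M$ with $\tau$ smooth.

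Next, for $s$, $t$, and $s+t$ all in this small range, compute
\[\varphi^{\tau(s+t)}=\psi^{s+t}=\psi^s\circ\psi^t=\varphi^{\tau(s)}\circ\varphi^{\tau(t)}=\varphi^{\tau(s)+\tau(t)}.\]
Evaluating at $x_0$ and invoking the local injectivity of $\tau\mapsto\varphi^\tau(x_0)$ near $0$ yields the Cauchy-type relation $\tau(s+t)=\tau(s)+\tau(t)$. A smooth additive function on a neighborhood of $0$ is linear, so $\tau(t)=ct$ with $c=\tau'(0)$. Thus $\psi^t=\varphi^{ct}$ for small $t$, and by the one-parameter group property this extends to all $t\in\mathbb R$. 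Differentiating at $t=0$ gives $Y=cX$, which is exactly the triviality statement of Definition \ref{DEFTrivialCentralizer}.

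The only genuine difficulty is the selection of a single smooth branch $\tau(t)$: the hypothesis only asserts existence of \emph{some} $\tau$ near $0$, and without care one might worry about jumps to other orbit periods of $\Phi$. This is resolved cleanly by the implicit function theorem applied at a non-fixed point, which simultaneously hands us uniqueness near $0$ and $C^r$ regularity. Everything else is a routine functional-equation argument.
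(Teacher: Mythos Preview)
Your overall strategy matches the paper's: obtain $\tau(s)$ with $\psi^s = \varphi^{\tau(s)}$, verify additivity, conclude linearity. The paper establishes continuity of $\tau$ at $0$ from partial hyperbolicity and then invokes Cauchy's functional equation for continuous additive maps; you instead read $\tau$ off the orbit at a non-fixed point to get smoothness directly, which is a clean shortcut when it works.

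There is, however, a gap in the branch-matching step you flag in your last paragraph but do not actually close. The hypothesis hands you, for each small $t$, some $\tau_t$ with $|\tau_t| < \epsilon_0$ and $\psi^t = \varphi^{\tau_t}$; your local-embedding argument produces a smooth $\sigma(t)$ in the injectivity range $(-\epsilon_1, \epsilon_1)$ of $\tau \mapsto \varphi^\tau(x_0)$ with $\varphi^{\sigma(t)}(x_0) = \psi^t(x_0)$. Equality at the single point $x_0$ only gives $\sigma(t) \equiv \tau_t$ modulo the period of $x_0$, and if $x_0$ happens to be periodic with period less than $\epsilon_0$---which ``non-fixed'' does not exclude---one may have $\sigma(t) \neq \tau_t$ and hence $\varphi^{\sigma(t)} \neq \psi^t$. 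The implicit function theorem selects \emph{a} branch, not necessarily the one satisfying $\psi^t = \varphi^{\sigma(t)}$ on all of $M$. The fix is short and is essentially the paper's continuity step: partial hyperbolicity forces $\Phi$ to be aperiodic (if $\varphi^T = \id$ then $D\varphi^{nT} = \id$ for all $n$, contrary to the exponential rates), so $\tau_t$ is uniquely determined by $\psi^t$; then $\varphi^{\tau_t} = \psi^t \to \id$ together with compactness of $[-\epsilon_0,\epsilon_0]$ forces $\tau_t \to 0$. Once that is in hand, $\tau_t$ eventually lies in the injectivity range at any non-fixed $x_0$, and your smoothness argument goes through.
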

\begin{proof}
If a flow \(\Psi\) commutes with \(\Phi\) then discreteness of the centralizer implies that \(\psi^s=\varphi^{\tau(s)}\) for small enough \(s\), hence also that \(\id=\psi^0=\varphi^{\tau(0)}\), i.e., \(\tau(0)=0\) (since \(\Phi\) is not periodic). 

Since \(D\varphi^{\tau(0)}(p)=\id=D\psi^0(p)=\lim_{s\to0}D\psi^s(p)=\lim_{s\to0}D\varphi^{\tau(s)}(p)\) (with \(p\) as above), partial hyperbolicity implies that \(\tau\) is continuous at 0, and the commutation relation gives \emph{additivity}: \(\tau(s+t)=\tau(s)+\tau(t)\). 

Together, these give a \(c\in\R\) such that \(\tau(s)=cs\) for all \(t\in\R\): Continuity at 0 implies continuity at any \(s\) because \(\tau(s+\epsilon)=\tau(s)+\tau(\epsilon)\lto{\epsilon\to0}\tau(s)\); additivity implies linearity of \(\tau\) on \(\mathbb Q\), then continuity implies linearity on \(\R\) \cite[Theorem 1, \S 2.1]{Aczel}, \cite[\S 2.1.1]{MR0124647}.
\end{proof}
We note that assuming the absence of fixed points makes a few of the arguments below a little simpler. 
\begin{proof}[Proof of Theorem \ref{THMTrivFlowCentralizer}] 
First, we produce a (nonfixed) closed orbit that is isolated among closed orbits of at most twice its period. 

There is a nonwandering point that is not fixed: every point in the support of an invariant Borel probability measure is nonwandering, and a partially hyperbolic flow has a nonatomic ergodic invariant measure because the topological entropy is positive \cite[Theorem 2]{CatsigerasTian} while atomic measures have zero entropy\rlap.\footnote{Lemma \ref{LEMNW} below bypasses this entropy argument---which is not needed when volume is preserved---at the expense of an initial perturbation.}

From this nonwandering point, the Pugh Closing Lemma gives a \(C^1\)-close (partially hyperbolic) flow with a closed orbit, and this works for volume-preserving or symplectic flows \cite{Arnaud}. 

Contact flows \emph{always} have a closed orbit by the Weinstein Conjecture \cite{Taubes}.

By the Transversality Theorem \cite[Theorems A.3.19, 7.2.4 \& p.\ 296]{KatokHasselblatt}, this closed orbit can (for a \(C^r\)-open \(C^1\)-dense set of such flows) be taken transverse and hence isolated among closed orbits of up to twice its period.

Theorem \ref{THMOpenDenseAccessibility} then gives a \(C^1\)-open dense set of accessible (volume-preserving/\hskip0pt{}symplectic/\hskip0pt{}contact) partially hyperbolic flows \(\Phi\) with a transverse closed orbit \(\Oc(p)\) that is (hence) isolated among closed orbits of at most twice its period. 

We now verify that their diffeomorphism-centralizer is discrete (Proposition \ref{PRPCentralizerCriterion}). 

If \(f\colon M\to M\) is \(C^1\), commutes with \(\Phi\) and is sufficiently close to the identity, then \(f\) maps closed orbits of \(\Phi\) to closed orbits of \(\Phi\) with the same period. Thus, since \(\Oc(p)\) is isolated among orbits of the same period, \(f(p)\in\Oc(p)\) once \(f\) is sufficiently close to the identity, and indeed \(f(p)=\varphi^\tau(p)\) for some \(\tau\) near 0. Thus \(h\dfn f\circ\varphi^{-\tau}\) fixes \(p\) and commutes with \(\Phi\). Here, ``sufficiently close'' means that \(d_{C^0}(h,\id)<\epsilon\), where \(\epsilon>0\) is as in Lemma \ref{LEMBurslem52} below. 

We wish to see that this implies that \(h=\id\), and it suffices to verify
this on the dense set \(\mathcal M\) of points that are accessible from
\(p\) with \((su)\)-paths that avoid fixed stable and unstable leaves,
i.e., disjoint from \(W^u(x)\) and \(W^s(x)\) for any fixed point \(x\). (This set is dense because the set of fixed points is finite and the invariant manifolds have positive codimension.)

For any \(y\in\mathcal M\), recursive application of Lemma \ref{LEMBurslem53} below to a finite \textsl(su)-path from \(p\) to \(y\) shows that \(h\) fixes all vertices of this path and hence \(y\). 
\end{proof}
\begin{lemma}[{\cite[Lemma 5.3]{Burslem}}]\label{LEMBurslem53}
If \(h(q)=q\) and \(W^u(q)\) contains no fixed point, then \(h(x)=x\) for all \(x\in W^u(q)\). Likewise for \(W^s(q)\).
\end{lemma}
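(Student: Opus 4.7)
The plan is to prove the lemma by an open-closed argument on the fixed-point set of \(h\) within \(W^u(q)\), using that \(h\) preserves the strong unstable foliation and is a \(C^1\)-small perturbation of the identity that commutes with \(\Phi\).

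First I would show that \(h\) preserves the strong unstable foliation. The leaf \(W^u(y)\) is characterized dynamically as the set of \(z\) with \(d_M(\varphi^{-t}(z),\varphi^{-t}(y))\to0\) exponentially; since \(h\) commutes with \(\varphi^{-t}\) and is uniformly continuous on the compact manifold \(M\), this characterization is preserved, giving \(h(W^u(y))=W^u(h(y))\) for every \(y\). With \(h(q)=q\), this yields \(h(W^u(q))=W^u(q)\). Now let \(F\dfn\{x\in W^u(q):h(x)=x\}\). Then \(F\) is closed (by continuity) and contains \(q\); moreover, if \(x\in F\), then the hypotheses of the lemma apply verbatim with \(q\) replaced by \(x\), because \(W^u(x)=W^u(q)\) still contains no fixed point of \(\Phi\). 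Since \(W^u(q)\) is connected, it therefore suffices to verify that \(F\) is open at \(q\), i.e., that \(h\) is the identity on some neighborhood of \(q\) within \(W^u(q)\).

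The local identity at \(q\) is obtained from the commutation relation \(h\circ\varphi^{-t}=\varphi^{-t}\circ h\) combined with the backward contraction of \(W^u\). For \(x\in W^u_{\mathrm{loc}}(q)\), I write \(h(x)=\varphi^{t}(h(\varphi^{-t}(x)))\). Because \(h\) fixes \(\varphi^{-t}(q)\) and \(Dh-I\) is small in \(C^0\), the displacement \(z\mapsto h(z)-z\) vanishes at \(z=\varphi^{-t}(q)\) and is Lipschitz with small constant, so it has size at most \(\|Dh-I\|_{C^0}\cdot d_{W^u}(\varphi^{-t}(x),\varphi^{-t}(q))\); since \(d_{W^u}(\varphi^{-t}(x),\varphi^{-t}(q))\) decays at rate \(\lambda_3^{-t}\) in the leaf metric, the displacement at \(\varphi^{-t}(x)\) is already exponentially small. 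Reapplying \(\varphi^{t}\) re-amplifies by a factor comparable to \(\mu_3^{t}\); if one controls the ratio \(\mu_3/\lambda_3\) (e.g.\ by passing to an adapted Pesin-type metric in a neighborhood of the backward orbit of \(q\) so the rates match), the right-hand side tends to \(0\) and one concludes \(h(x)=x\). Iterating once more at the new fixed point extends the conclusion to all of \(W^u_{\mathrm{loc}}(q)\), completing the open-closed argument.

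The principal obstacle is the quantitative interplay between the leaf metric on \(W^u\) and the ambient metric on \(M\): the estimate above produces a \(W^u\)-distance bound, but the \(C^0\)-smallness of \(h\) is given as a map of \(M\), and the two metrics are in general only locally comparable on each plaque. This is exactly where the no-fixed-point hypothesis enters: only at fixed points of \(\Phi\) can a strong unstable leaf accumulate on itself in \(M\) without separation in the intrinsic leaf metric. Because \(W^u(q)\) avoids the (finitely many) fixed points, its backward orbit stays in a compact set on which the leaves are uniformly locally embedded, so \(d_M\) and \(d_{W^u}\) are uniformly comparable in a tube around the backward orbit of \(q\). Making this uniform comparison precise (or, equivalently, carrying out the Burslem-style argument in a Pesin-adapted chart along the backward orbit) is the technical heart of the proof; everything else reduces to the commutation relation and the dynamical characterization of \(W^u\).
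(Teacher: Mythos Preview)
Your open-closed/rate-matching argument has a real gap, and the paper avoids it entirely with a one-line trick you overlooked.

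The gap is in the ``push-pull'' estimate. You bound the displacement at \(\varphi^{-t}(x)\) by \(\|Dh-I\|\cdot d_{W^u}(\varphi^{-t}(x),\varphi^{-t}(q))\lesssim\lambda_3^{-t}\) and then re-expand by \(\mu_3^{t}\); the product \((\mu_3/\lambda_3)^t\) generally blows up. Your proposed fix---an adapted Pesin-type metric---cannot help: a change of Riemannian metric alters only the constant \(C\) in Definition~\ref{DEFparthypnarrow}, not the asymptotic rates \(\lambda_3,\mu_3\), so the quotient is unchanged. One \emph{can} rescue the estimate by arguing that contraction and re-expansion occur along nearly the same orbit segment and hence nearly cancel, but that requires a distortion/Bowen-type control you did not supply; as written the step fails whenever \(E^u\) is not conformal.

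The paper's proof bypasses all of this. Once you know \(h(W^u(q))=W^u(q)\) (your first paragraph), take any \(x\in W^u(q)\); then \(h(x)\in W^u(x)\), and the commutation relation gives, for \emph{every} \(t\in\R\),
\[
d(\varphi^t(x),\varphi^t(h(x)))=d(\varphi^t(x),h(\varphi^t(x)))\le d_{C^0}(h,\id)<\epsilon.
\]
No rates, no open-closed argument, no \(C^1\)-smallness of \(h\)---just the global \(C^0\) bound. Now invoke Lemma~\ref{LEMBurslem52} (an expansivity statement along unstable leaves avoiding fixed points) to conclude \(h(x)=x\). The whole proof is three lines; the heavy lifting is pushed into Lemma~\ref{LEMBurslem52}, which is where the no-fixed-point hypothesis is actually used.
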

\begin{proof}
Suppose \(x\in W^u(q)\); the case  \(x\in W^s(q)\) is analogous. Then
\[
h(x)\in h(W^u(q))\xeq{h\in C^1}W^u(h(q))\xeq{h(q)=q}W^u(q)=W^u(x),
\]
while
\[
d(\varphi^t(x),\varphi^t(h(x)))=
d(\varphi^t(x),h(\varphi^t(x)))
<d_{C^0}(h,\id)<\epsilon.
\]
This implies \(h(x)=x\) by Lemma \ref{LEMBurslem52} below.
\end{proof}
The proof of \cite[Lemma 5.2]{Burslem} applies to leaves containing no fixed points:
\begin{lemma}
\label{LEMBurslem52}
There is an \(\epsilon > 0\) such that if \(W^u(x)\) contains no fixed point and \(y\in W^u(x)\) satisfies \(d(\varphi^t(x),\varphi^t(y))<\epsilon\) for all \(t\in\R\), then \(x = y\). Likewise for \(W^s(x)\).
\end{lemma}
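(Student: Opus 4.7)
The plan is to exploit the uniform exponential expansion along the strong unstable bundle $E^u$ and to derive a contradiction under the assumption $x\neq y$. First I would extract the expansion estimate: by the partial hyperbolicity of the time-$t$ map (Definition \ref{DEFparthypnarrow}) there exist constants $C>0$ and $\lambda_3>1$ with $\|d\varphi^t v\|\ge C^{-1}\lambda_3^t\|v\|$ for $v\in E^u$ and $t\ge0$. Integrating this along paths tangent to $E^u$, the intrinsic leafwise Riemannian distance on strong unstable leaves satisfies $d_{W^u}(\varphi^t(x),\varphi^t(y))\ge C^{-1}\lambda_3^t\,d_{W^u}(x,y)$ for all $t\ge0$. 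Then, using the compactness of $M$ together with the uniform continuity of the strong unstable foliation, I would fix a scale $\epsilon_0>0$ so that whenever $z\in M$ and $w\in W^u(z)$ with $d_{W^u(z)}(z,w)\le\epsilon_0$, one has $d(z,w)\ge\tfrac12 d_{W^u(z)}(z,w)$, and take $\epsilon\dfn\epsilon_0/3$.

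Assuming for contradiction that $y\neq x$ and $d(\varphi^t(x),\varphi^t(y))<\epsilon$ for all $t\in\R$, the function $\rho(t)\dfn d_{W^u}(\varphi^t(x),\varphi^t(y))$ is continuous in $t$, tends to $0$ as $t\to-\infty$ (by the defining property of $W^u$), and grows to $+\infty$ as $t\to+\infty$ by the expansion estimate, since $\rho(0)=d_{W^u}(x,y)>0$. The intermediate value theorem then produces some $t_0\in\R$ with $\rho(t_0)=\epsilon_0$, and at that instant the intrinsic/ambient comparison yields $d(\varphi^{t_0}(x),\varphi^{t_0}(y))\ge\epsilon_0/2>\epsilon$, contradicting the hypothesis. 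The stable case is handled identically by running the same argument in reverse time on $W^s$.

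The main obstacle I anticipate is establishing the uniform intrinsic-to-ambient distance comparison underpinning the choice of $\epsilon_0$; this is where the foliation-geometry assumptions (compactness of $M$, continuous dependence of the leaves on the base point) really have to do the work. The hypothesis that $W^u(x)$ contain no fixed point keeps the orbit in a regime where the unstable leaves are uniformly $C^1$-controlled, and it meshes with the application of the lemma in the proof of Theorem \ref{THMTrivFlowCentralizer}, where it is invoked only along leaves that are themselves not flow-invariant.
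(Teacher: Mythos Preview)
The paper does not supply its own proof of Lemma~\ref{LEMBurslem52}; it simply states that ``the proof of \cite[Lemma 5.2]{Burslem} applies to leaves containing no fixed points'' and records the statement. Your argument is the standard one and is essentially what Burslem's diffeomorphism proof becomes when transcribed to flows: uniform expansion of $E^u$ forces the intrinsic leaf distance $\rho(t)=d_{W^u}(\varphi^t(x),\varphi^t(y))$ to cross any fixed small scale $\epsilon_0$, where the uniform comparison of intrinsic and ambient distances (from compactness and $C^1$-continuity of the leaves) gives the contradiction. That is correct and complete.

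One remark: your argument does not actually use the hypothesis that $W^u(x)$ contain no fixed point---the expansion estimate on $E^u$ and the intrinsic/ambient comparison are uniform over the whole manifold for a partially hyperbolic flow, fixed points or not. Your closing paragraph suggests the hypothesis is needed for ``uniform $C^1$-control'' of the leaves, but that control is already global. The hypothesis is carried along in the paper because the lemma is only applied (via Lemma~\ref{LEMBurslem53}) along $su$-paths that avoid fixed invariant leaves, so there is no cost in including it; but it would be cleaner either to drop the hypothesis from your version or to acknowledge that your proof does not invoke it.
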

As promised, we provide here an alternate argument for the existence of the nonfixed nonwandering point.
\begin{lemma}\label{LEMNW}
A partially hyperbolic flow can be \(C^1\)-perturbed to have a nonwandering point that is not fixed.
\end{lemma}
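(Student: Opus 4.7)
My plan is to split into cases. If \(\Phi\) already has a nonwandering point that is not fixed, the trivial perturbation (\(\Phi\) itself) suffices; so suppose instead \(\Omega(\Phi)\subset\mathrm{Fix}(\Phi)\). The flow is then gradient-like: every orbit \(\alpha\)- and \(\omega\)-asymptotes to the fixed-point set, which is a nonempty, closed, nowhere-dense subset of \(M\) (Remark~\ref{REMFPNWDense}).

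In this gradient-like setting I would exploit partial hyperbolicity at a fixed point \(q\): the bundles \(E^u(q)\) and \(E^s(q)\) are nontrivial, so the strong unstable and stable manifolds \(W^u(q)\) and \(W^s(q)\) have positive dimension. For any \(x\in W^u(q)\setminus\{q\}\), the forward orbit of \(x\) converges to some fixed point \(q'\) (since \(\omega(x)\subset\mathrm{Fix}(\Phi)\)), giving \(W^u(q)\cap W^s(q')\neq\emptyset\). Iterating along forward orbits of points in \(W^u(q')\) yields a heteroclinic chain; by compactness of \(\mathrm{Fix}(\Phi)\) this chain can be followed back to a fixed point already visited, producing a heteroclinic cycle involving \(q\).

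The heart of the argument is to invoke the \(C^1\)-connecting lemma for flows (Hayashi, Wen--Xia, Bonatti--Crovisier) along this cycle to obtain an arbitrarily \(C^1\)-small perturbation \(\Psi\) of \(\Phi\) with \((W^u_\Psi(q)\cap W^s_\Psi(q))\setminus\{q\}\neq\emptyset\)---a genuine homoclinic orbit at \(q\). A further arbitrarily small perturbation localized near an intersection point, chosen to tilt \(W^u_\Psi\) across \(W^s_\Psi\), renders the intersection transverse. The Smale--Birkhoff theorem for flows then furnishes a horseshoe-type invariant set containing non-fixed periodic orbits, which are the desired nonwandering non-fixed points. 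Partial hyperbolicity persists under these \(C^1\)-small perturbations by Remark~\ref{REMPersistence}. The principal obstacle I foresee is applying the connecting lemma in the partially hyperbolic flow setting---the classical statements concern general diffeomorphisms or flows without extra structure---and confirming that the transversality step can be carried out without destroying the connection just established; a fallback would be to combine the two perturbations into a single application of the connecting lemma that produces a short closed orbit directly from the heteroclinic cycle.
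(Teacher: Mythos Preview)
Your approach is considerably more elaborate than the paper's, and it has a genuine gap. The paper argues as follows: in the volume-preserving case Poincar\'e recurrence already gives nonfixed nonwandering points; otherwise a Kupka--Smale perturbation makes all fixed points hyperbolic, hence \emph{finite} in number, and if these were the whole nonwandering set then \(M\) would be the union of their (finitely many) stable manifolds---impossible since each has positive codimension (because \(E^u\neq0\)). No connecting lemma, no horseshoes, no Smale--Birkhoff.

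The gap in your plan is the cycle construction. You assert that ``by compactness of \(\mathrm{Fix}(\Phi)\) this chain can be followed back to a fixed point already visited,'' but compactness alone gives no such thing: without a preliminary perturbation, \(\mathrm{Fix}(\Phi)\) need not be finite (partial hyperbolicity only forces it to be nowhere dense, since the center direction at a fixed point can carry eigenvalue \(1\) for the time-\(1\) map), and an infinite heteroclinic chain in a merely compact set need never revisit a point. Even the step ``\(\omega(x)\) is a single fixed point \(q'\)'' is unjustified---\(\omega(x)\) is a closed connected invariant subset of \(\mathrm{Fix}(\Phi)\), which could itself have nontrivial connected components. The natural patch is to invoke Kupka--Smale first to make the fixed set finite and hyperbolic; but once you have done that, the paper's one-line Baire argument (finitely many positive-codimension submanifolds cannot cover \(M\)) finishes immediately, and the heavy machinery you propose---connecting lemma, a further transversality perturbation, and Smale--Birkhoff---becomes superfluous.
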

\begin{proof}
For volume-preserving flows, all points are nonwandering by the Poincar\'e Recurrence Theorem, so there are nonwandering points which are not fixed.

Without volume-preservation, the Kupka--Smale Theorem \cite[Theorem 6.1.6]{FH} gives a \(C^r\) perturbation for which all fixed points are hyperbolic (and hence also finite in number). Unless there is an additional nonwandering point, the nonwandering set is then hyperbolic and equal to the limit set \cite[Proposition 1.5.17]{FH}---so the union of their (finitely many!) stable manifolds is \(M\) \cite[Proposition 5.3.40]{FH}, contrary to partial hyperbolicity (stable manifolds have positive codimension).
\end{proof}
\bibliography{centralizerbib}{}
\bibliographystyle{plain}
\end{document}